\begin{document}
\large
\theoremstyle{plain}
\newtheorem{thm}{Theorem}[section] 

\theoremstyle{definition}
\newtheorem{defn}[thm]{Definition} 
\newtheorem{proposition}[thm]{Proposition} 
\newtheorem{theorem}[thm]{Theorem} 
\newtheorem{cor}[thm]{Corollary} 
\newtheorem{lemma}[thm]{Lemma}
\newtheorem{example}[thm]{Example} 
\newtheorem{remark}[thm]{Remark}

\renewcommand{\qedsymbol}{$\blacksquare$}
\newcommand{\norm}[1]{\left\lVert#1\right\rVert}
\let\oldle\le
\let\le\leqslant
\let\oldge\ge
\let\ge\geqslant
\let\oldemptyset\emptyset
\let\emptyset\varnothing
\let\subset\propersubset
\let\subset\subseteq
\let\epsilon\stupidepsilon
\let\epsilon\varepsilon
\newcommand{\onto}{\twoheadrightarrow}
\newcommand{\hto}{\stackrel{\mu^*}{\longrightarrow}}

\title{Norm Inequalities for the Fourier Coefficients of Some Almost Periodic Functions}
\author{Y. Boryshchak, A. Myers, and Y. Sagher \\ Florida Atlantic University, Department of Mathematical Sciences, Boca Raton, FL}

\maketitle

Keywords and phrases: Finitely additive measure spaces, almost periodic functions, Paley-type theorems


\begin{abstract}
	Using C. Fefferman's embedding of a charge space in a measure space allows us to apply standard interpolation theorems to prove norm inequalities for Besicovitch almost periodic functions. This yields an analogue of Paley's Inequality for the Fourier coefficients of periodic functions.
\end{abstract}

\section{Introduction}

The Fourier coefficients of a  $2\pi$-periodic function $f \in L^1(-\pi, \pi)$ are $\hat{f}(n):=\frac{1}{2\pi}\int_{-\pi}^{\pi}e^{-inx}f(x)dx$ with $n\in \mathbb{Z}$. For  $1\le q \le 2$ and $1/q+1/q'=1$ the Hausdorff-Young inequality reads (e.g. \cite{Zygmund} Chapter XII, Theorem 2.3):
\begin{equation}
\left(\sum_{n=-\infty}^{\infty}|\hat{f}(n)|^{q'}\right)^{1/q'}\le \left(\frac{1}{2\pi}\int_{-\pi}^{\pi}|f(x)|^q dx\right)^{1/q}.
\end{equation}
R. A. E. C. Paley \cite{Paley} extended the Hausdorff-Young  inequality to 
\begin{equation}
\label{eq:pap1a1}
\left(\sum_{n=1}^{\infty}[n^{1/q'}\hat{f}_n^*]^{q}\frac{1}{n}\right)^{1/q}\le C(q)\left(\int_{-\pi}^{\pi}|f(x)|^q dx\right)^{1/q}
\end{equation}
where $\{\hat{f}_n^*\}_{n\in \mathbb{N}}$ is the decreasing rearrangement of the sequence $\left\{|\hat{f}(n)|\right\}_{n\in \mathbb{N}}$.

In \cite{Bruno} A. Avantaggiati, G. Bruno and R. Iannacci proved a Hausdorff-Young inequality for almost periodic trigonometric polynomials, i.e., trigonometric polynomials of the form:
	\begin{equation}
	\label{eq:pap1}
	P(x)=\sum_{k=1}^{n} a(\eta_k; P) e^{i\eta_k x}, \qquad \eta_k\in \mathbb{R}, \quad a(\eta_k; P)\in \mathbb{C}
	\end{equation}
	and for functions that are in the completion of these polynomials -- the space of  Besicovitch almost periodic functions, $B^q_{ap}$, with the norm
	\begin{equation}
	\label{eq:pap2}
	\norm{P}_{B^q_{ap}}=\lim_{\tau\to \infty}\left(\frac{1}{2\tau}\int_{-\tau}^{\tau} |P(x)|^q dx\right)^{1/q}. 
	\end{equation}
Since
\begin{equation}\label{eq:intro1}
\langle e^{i\eta_jx}, e^{i\eta_kx} \rangle=\lim_{\tau\to \infty}\left(\frac{1}{2\tau}\int_{-\tau}^{\tau} e^{i\eta_jx}\cdot e^{-i\eta_kx}dx\right)=\begin{cases} 
1 & \eta_j=\eta_k \\
0 & \eta_j\ne \eta_k
\end{cases}
\end{equation}
the functions $\{ e^{i\eta_kx}\}_{k\in\mathbb{N}}$ form an orthonormal system in  $B^2_{ap}$. For $f\in B^1_{ap}$  the Fourier coefficients of $f$ are defined
\begin{equation}
\label{eq:pap2a}
a(\eta; f)=\lim_{\tau\to \infty}\left(\frac{1}{2\tau}\int_{-\tau}^{\tau} f(x)e^{-i\eta x} dx\right), \qquad \eta \in \mathbb{R}.
\end{equation}
If $f$ is a periodic function and $\eta\in \mathbb{Z}$ then $a(\eta; f)=\hat{f}(\eta)$.

In \cite{Bruno} the authors prove  the following extension of the Hausdorff -Young inequality: 
\begin{thm}
	\label{thm:pap1}
Let $1\le q \le 2$ and let  $f\in B^q_{ap}$.   If $\left\{a(\eta_n; f)\right\}_{n\in \mathbb{N}}$ is a sequence of Fourier coefficients of $f$ then
	\begin{equation}
	\label{eq:pap3}
	\left(\sum_{j=1}^{\infty} |a(\eta_j; f)|^{q'}\right)^{1/q'}\le \limsup_{\tau\to \infty}\left(\frac{1}{2\tau}\int_{-\tau}^{\tau} |f(s)|^q ds\right)^{1/q}.
	\end{equation}
\end{thm}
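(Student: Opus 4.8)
The plan is to view $f \mapsto (a(\eta_j;f))_{j}$ as a single linear operator $T$ and to obtain \eqref{eq:pap3} by interpolating between the behavior of $T$ at the two endpoints $q=2$ and $q=1$. The classical Hausdorff--Young theorem follows exactly this template via the Riesz--Thorin theorem; the one feature that prevents a verbatim repetition here is that the mean value $M(f)=\lim_{\tau\to\infty}\frac{1}{2\tau}\int_{-\tau}^{\tau} f$ underlying the $B^q_{ap}$ norm \eqref{eq:pap2} is a finitely additive mean rather than integration against a countably additive measure, so Riesz--Thorin does not apply to the spaces $B^q_{ap}$ as they stand. Overcoming this is precisely the role of C.~Fefferman's embedding.

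The two endpoint estimates are immediate. For $q=2$ (so $q'=2$), equation \eqref{eq:intro1} says that $\{e^{i\eta_k x}\}$ is an orthonormal system in the Hilbert space $B^2_{ap}$, and $a(\eta_j;f)=\langle f, e^{i\eta_j x}\rangle$; Bessel's inequality then gives $\sum_{j} |a(\eta_j;f)|^2 \le \|f\|_{B^2_{ap}}^2$, which is \eqref{eq:pap3} with constant $1$. For $q=1$ (so $q'=\infty$), the definition \eqref{eq:pap2a} yields $|a(\eta_j;f)| \le \limsup_{\tau\to\infty}\frac{1}{2\tau}\int_{-\tau}^{\tau}|f(s)|\,ds$ for every $j$, and hence $\sup_j |a(\eta_j;f)| \le \|f\|_{B^1_{ap}}$. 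The substance of the theorem is therefore to transfer these trivial bounds to the intermediate exponents.

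To interpolate, I would first restrict attention to the at most countable set of frequencies $\{\eta_j\}$ carrying nonzero coefficients, and then invoke Fefferman's embedding of the charge space determined by $M$ into a genuine measure space $(\Omega,\Sigma,\mu)$. This produces an isometry $f \mapsto \tilde f$ with $\|\tilde f\|_{L^q(\mu)} = \|f\|_{B^q_{ap}}$, under which each exponential $e^{i\eta_j x}$ corresponds to a function $\phi_j$ with $\|\phi_j\|_{L^\infty(\mu)}=1$ and $\{\phi_j\}$ orthonormal in $L^2(\mu)$, so that $a(\eta_j;f)=\int_\Omega \tilde f\,\overline{\phi_j}\,d\mu$. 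The coefficient map thus becomes an honest linear operator $\tilde f \mapsto (\int_\Omega \tilde f\,\overline{\phi_j}\,d\mu)_j$ on $L^q(\mu)$, and the two computations above read $T:L^2(\mu)\to \ell^2$ and $T:L^1(\mu)\to\ell^\infty$, each with norm at most $1$. The Riesz--Thorin theorem now applies without modification and yields $\|T\tilde f\|_{\ell^{q'}}\le \|\tilde f\|_{L^q(\mu)}$ for all $1<q<2$, which is exactly \eqref{eq:pap3}.

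The main obstacle I anticipate is bookkeeping rather than hard analysis: one must verify that a single Fefferman embedding simultaneously preserves the $B^q_{ap}$ norms at both endpoints, the orthonormality relations \eqref{eq:intro1}, and the integral representation of the Fourier coefficients, so that the \emph{same} $\tilde f$ is the object being interpolated at $q=1$ and $q=2$; one must also justify replacing the genuine mean on the completion $B^q_{ap}$ by the $\limsup$ appearing in \eqref{eq:pap3}. Once this dictionary between the almost periodic setting and $L^q(\mu)$ is in place, the interpolation step itself is routine.
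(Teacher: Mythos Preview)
The paper does not give its own proof of this statement. Theorem~\ref{thm:pap1} is quoted as a result of Avantaggiati, Bruno and Iannacci \cite{Bruno}; it serves only as motivation for the paper's main contribution, the Paley-type inequality of Theorem~\ref{thm:pap2}. So there is no ``paper's proof'' against which to compare your proposal.

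That said, your strategy is exactly the template the paper develops and applies to the \emph{stronger} inequality~\eqref{eq:pap1a}: pass through Fefferman's embedding (Theorem~\ref{thm:cfs1}) so that the charge space $(\mathbb{R},\mathcal{L}(\mathbb{R}),\gamma)$ is replaced by a genuine measure space, verify that the coefficient map becomes an honest linear operator with the expected endpoint bounds (see \eqref{eq:m0} and \eqref{eq:m1}), and then interpolate. The only difference is that the paper interpolates with Marcinkiewicz--Hunt to reach the Lorentz target $\ell^{q',q}$, whereas you would use Riesz--Thorin to reach the weaker $\ell^{q'}$ target. In that sense your argument is a special case of the machinery the paper builds, and the ``bookkeeping'' you flag---that a single embedding simultaneously preserves $L^p$ norms, products, and integrals, so that orthonormality and the formula $a(\eta_j;f)=\int \tilde f\,\overline{\phi_j}\,d\mu'$ transfer---is precisely what parts (1)--(3) of Theorem~\ref{thm:cfs1} guarantee (the map $\phi$ is multiplication-preserving and integral-preserving). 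One small point you should make explicit: the embedding in Theorem~\ref{thm:cfs1} is stated for real-valued functions, so the complex exponentials must be handled via real and imaginary parts; this is routine but should be said.

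The proof in \cite{Bruno} itself proceeds differently, working directly with almost periodic trigonometric polynomials and passing to the completion, without the detour through Fefferman's construction.
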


We shall prove a Paley-type theorem for   $B^q_{ap}$ functions. 
We denote the Lebesgue measure space on the real line by $(\mathbb{R}, \mathcal{L}(\mathbb{R}), \lambda)$. 
The definition of the Besicovitch $B^q_{ap}$ space as the completion of trigonometric polynomials (\ref{eq:pap1}) under the norm $\norm{\cdot }_{B^q_{ap}}$ suggests that one could look at the set function $\gamma$ on Lebesgue measurable subsets of $\mathbb{R}$ for which the following limit
\begin{equation}
\label{eq:pap4}
\gamma(S)=\lim_{\tau\to \infty}\frac{\lambda\left(\left\{S \cap (-\tau, \tau)\right\}\right)}{2\tau}
\end{equation}
exists. Using Banach limits one extends $\gamma$ to a finitely additive set function on all sets of $\mathcal{L}(\mathbb{R})$. Clearly $\gamma$ is a finitely additive set function, but since
\begin{equation}
\label{eq:pap5}
\sum_{n\in\mathbb{Z}}\gamma([n, n+1))=0<1= \gamma\left(\bigcup_{n\in\mathbb{Z}}[n, n+1)\right)
\end{equation}
$\gamma$ is not $\sigma$-additive.  To handle the lack of $\sigma$-additivity   we use  C. Fefferman's \cite{Fefferman} construction of a measure space $(\mathbb{R}', \mathcal{L}'(\mathbb{R}'), \gamma')$ with respect to which the $L^q(\mathbb{R}', \mathcal{L}'(\mathbb{R}'), \gamma')$ norm is equivalent to $\norm{\cdot }_{B^q_{ap}}$. This enables us to use  standard interpolation theorems to prove the following  theorem.

\begin{theorem} 
	\label{thm:pap2}
	Let $1\le q \le 2$ and let  $f\in B^q_{ap}$.   If $\left\{a(\eta_n; f)\right\}_{n\in \mathbb{N}}$ is a sequence of Fourier coefficients of $f$ then
	\begin{equation}
	\label{eq:pap1a}
	\left(\sum_{n=1}^{\infty}[n^{1/q'}a^*(\eta_n; f)]^{q}\frac{1}{n}\right)^{1/q}\le C(q)\left(\int_{\mathbb{R}}|f(x)|^q d\gamma(x)\right)^{1/q}
	\end{equation}
	where $\{a^{*}(\eta_n; f)\}_{n\in \mathbb{N}}$ is the decreasing rearrangement of the sequence $\left\{|a(\eta_n; f)|\right\}_{n\in \mathbb{N}}$.

\end{theorem}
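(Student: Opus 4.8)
The plan is to transport the whole problem, via C.~Fefferman's embedding, into the genuine ($\sigma$-additive) measure space $(\mathbb{R}', \mathcal{L}'(\mathbb{R}'), \gamma')$ announced above, where classical interpolation is legitimate, and then to recognize the left-hand side of (\ref{eq:pap1a}) as a Lorentz sequence-space norm. Write $e_n$ for the image of $e^{i\eta_n x}$ under the embedding. By the defining property of the Fefferman construction the mean $\lim_{\tau\to\infty}\frac{1}{2\tau}\int_{-\tau}^{\tau}(\cdot)\,dx$ is realized exactly as integration $\int_{\mathbb{R}'}(\cdot)\,d\gamma'$; hence (\ref{eq:intro1}) becomes orthonormality of $\{e_n\}$ in $L^2(\gamma')$, each $e_n$ has modulus one $\gamma'$-a.e., and (\ref{eq:pap2a}) becomes $a(\eta_n; f)=\langle f, e_n\rangle_{L^2(\gamma')}$. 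Define the coefficient operator $Tf=\{a(\eta_n; f)\}_{n}$. The left-hand side of (\ref{eq:pap1a}) is then exactly the Lorentz norm $\norm{Tf}_{\ell^{q',q}}$, where $\norm{c}_{\ell^{p,r}}=\big(\sum_{n\ge 1}(n^{1/p}c_n^*)^{r}\,n^{-1}\big)^{1/r}$ and $c^*$ is the decreasing rearrangement. Since $\int_{\mathbb{R}}|f|^{q}\,d\gamma=\norm{f}_{B^q_{ap}}^{q}$ is equivalent to $\norm{f}_{L^q(\gamma')}^{q}$, the theorem reduces to the single operator bound $T:L^q(\gamma')\to\ell^{q',q}$.

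This bound I would obtain by real interpolation between two elementary endpoint estimates. At the $L^2$ endpoint, Bessel's inequality for the orthonormal system $\{e_n\}$ gives $\norm{Tf}_{\ell^2}\le\norm{f}_{L^2(\gamma')}$, so $T:L^2(\gamma')\to\ell^2$ with norm at most $1$. At the $L^1$ endpoint, since $|e_n|=1$ $\gamma'$-a.e., $|a(\eta_n; f)|\le\int_{\mathbb{R}'}|f|\,|e_n|\,d\gamma'=\norm{f}_{L^1(\gamma')}$ for every $n$, whence $T:L^1(\gamma')\to\ell^{\infty}$ with norm at most $1$. Both are honest strong-type bounds on $\sigma$-additive measure and sequence spaces.

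Now I would invoke the standard real-interpolation identities for these scales (equivalently, the off-diagonal Marcinkiewicz theorem). Fixing $1<q<2$ and setting $\theta=2/q'\in(0,1)$, one has $1/q=(1-\theta)+\theta/2$ and $1/q'=\theta/2$, so that $(L^1(\gamma'),L^2(\gamma'))_{\theta,q}=L^{q}(\gamma')$ and $(\ell^{\infty},\ell^2)_{\theta,q}=\ell^{q',q}$. Applying the functor $(\cdot,\cdot)_{\theta,q}$ to the two endpoint bounds yields $T:L^q(\gamma')\to\ell^{q',q}$ boundedly, with operator norm a constant $C(q)$ depending only on $q$ and the Fefferman equivalence constant. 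Combined with the identifications of the first paragraph this is precisely (\ref{eq:pap1a}). The case $q=2$ is just Bessel's inequality with $\ell^{2,2}=\ell^2$, while $q=1$ is the limiting endpoint of the scale.

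The step I expect to be the true obstacle is not the interpolation, which is routine once we are on a $\sigma$-additive space, but the \emph{faithful} transfer through Fefferman's construction: verifying that the embedding simultaneously renders $\norm{\cdot}_{L^q(\gamma')}$ equivalent to $(\int_{\mathbb{R}}|\cdot|^{q}\,d\gamma)^{1/q}$ across the whole range $1\le q\le 2$, represents the mean (and hence the coefficient functional $a(\eta_n;\cdot)$) as an honest $\gamma'$-integral so that $\{e_n\}$ stays orthonormal in $L^2(\gamma')$, and keeps $|e_n|=1$ $\gamma'$-a.e. so the $L^1\to\ell^{\infty}$ estimate survives. Only this last pointwise-modulus statement together with the compatibility of $a(\eta_n;\cdot)$ with the embedded inner product requires genuine care; once they are in hand, the two endpoint estimates and the interpolation close the argument.
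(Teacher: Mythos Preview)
Your proposal is correct and follows essentially the same route as the paper: transfer via Fefferman's embedding to the $\sigma$-additive space $(\mathbb{R}',\mathcal{L}'(\mathbb{R}'),\gamma')$, use the $L^1\to\ell^\infty$ bound from $|e_n|\le 1$ together with Bessel's $L^2\to\ell^2$ bound as endpoints, and then interpolate (the paper invokes Marcinkiewicz--Hunt where you write out the $(\cdot,\cdot)_{\theta,q}$ functor, but this is the same argument). You are also right that the substantive content lies in verifying that the embedding preserves the $L^q$ norms, the integral pairing defining $a(\eta_n;\cdot)$, and the modulus condition on the $e_n$; this is precisely what the paper's Sections~\ref{pp:sec:charges}--\ref{pp:sec:apf} are devoted to.
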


Since the left hand side of (\ref{eq:pap1a}) is the $l^{q',q}$ -norm of the sequence $\{a(\eta_n; f)\}_{n\in\mathbb{N}}$ and for $1<q<2$ the space $l^{q',q}$ is a subspace of $l^{q'}$, except for $C(q)$, inequality (\ref{eq:pap1a}) is stronger than (\ref{eq:pap3}). Also, we shall extend (\ref{eq:pap1a}) to the case $l^{q,r}$ with $0<r < \infty$ and $1<q<2$. \\

The paper is organized as follows. Section \ref{pp:sec:charges} contains a brief theory of finitely additive measures, ``charges". From this, some facts on integration and new results on the well-definedness of $ L^{p,q} $ for charge spaces, using Cauchy sequences, are presented. In Section \ref{pp:sec:embedfeff}, we prove that the C. Fefferman construction of a measure space that contains an isometric isomorphic image of a given charge space also preserves $L^{p,q}$ norms. The set function, $ \gamma $, in (\ref{eq:pap4}) is extended to a charge in Section \ref{pp:sec:apf} using Banach Limits. Section \ref{pp:sec:paley} combines the results of the previous sections to prove a Paley-type theorem for Besicovitch almost periodic functions.

\section{Just Enough Theory of Charges}\label{pp:sec:charges}
We present a short outline of  part of the theory of charges that is used in the notes and  refer the reader to  K. Rao and  M. Rao \cite{Rao} for an excellent exposition of the subject. Some proofs of known theorems are not included here, but may be found in \cite{Yarema}.

\subsection{Charges, simple functions, and spaces of measurable functions}

Let $\Omega$ be a set and let $\mathcal{F}$ be a collection of subsets of $\Omega$. $\mathcal{F}$ is said to be a \textbf{field} on $\Omega$ if $ \Omega \in \mathcal{F} $ and $ \mathcal{F} $ is closed under finite unions and set differences. A map $ \mu:\mathcal{F} \rightarrow [0,\infty] $ is said to be a \textbf{charge} on $ \mathcal{F} $ if $ \mu(\emptyset) = 0 $  and $ \mu $ is finitely additive. A \textbf{probability charge} is a charge for which $ \mu(\Omega) = 1 $. The triple $ (\Omega,\mathcal{F},\mu) $ is called a \textbf{charge space}. \\ \\
For a charge space, $(\Omega, \mathcal{F}, \mu)$, define the \textbf{outer charge} of $E \subseteq \Omega$ by $ \bm{\mu^*(E)}=\inf\{\mu(F) : E\subset F, \, F\in \mathcal{F}\} $. Note that if $E\in \mathcal{F}$, then  $\mu^*(E)=\mu(E)$. From the finite additivity of $ \mu$, $\mu^*$ is finitely, not necessarily $\sigma$, sub-additive and so the  Carath\'{e}odory charge induced by $\mu^*$ is finitely additive \cite{Yeh}.

\begin{defn}
	Let $(\Omega, \mathcal{F}, \mu)$ be a charge space. A sequence $\{f_n\}$ of real valued functions on $\Omega$ is said to be \textbf{Cauchy in } $\bm{\mu^*}$, if  $\forall \delta>0$,
	\begin{equation}
	\lim_{m,n\to \infty} \mu^*\left(\left\{w\in \Omega : |f_m(w)-f_n(w)|>\delta\right\}\right)=0.
	\end{equation}
	$\{f_n\}$ is said to \textbf{converge in} $\bm{\mu^*}$ to $f : \Omega \to \mathbb{R}$  and write $\bm{f_n\stackrel{\mu^*}{\longrightarrow}f}$, if  $\forall \delta>0$,
	\begin{equation}
	\lim_{n\to \infty} \mu^*\left(\left\{w\in \Omega : |f(w)-f_n(w)|>\delta\right\}\right)=0.
	\end{equation}
	In other words, $f_n$ converge to $f$ in outer charge\footnote{Rao and Rao \cite{Rao} name convergence in $\mu^*$, "hazy convergence".}.
\end{defn}

\begin{defn}
	Let $(\Omega, \mathcal{F}, \mu)$ be a  charge space. We say that  an extended real valued function $f$ on $\Omega$ is a $\bm{\mu}$\textbf{-null function}, or just null function, if for every $\epsilon>0$
	\begin{equation}
	\mu^*(\{|f|>\epsilon\})=0.
	\end{equation}
	We denote the set of null functions on  $(\Omega, \mathcal{F}, \mu)$ by $\bm{\mathcal{N}(\Omega, \mathcal{F}, \mu)}$. We say that two real valued functions $f$ and $g$ on $\Omega$ are \textbf{equal $\bm{\mu}$-almost everywhere} and write $\bm{f=g}$ \textbf{$\bm{\mu}$-a.e.} (or just $f=g$ a.e. ) if $f-g$ is a $\mu$-null function. We say that $\bm{f\le g}$ \textbf{$\bm{\mu}$-a.e.} if there exists a null function, $h$, s.t. $f\le g+h$.
\end{defn}

\begin{remark}
	$f, g\in \mathcal{N}(\Omega, \mathcal{F}, \mu)$ implies that $f+g\in \mathcal{N}(\Omega, \mathcal{F}, \mu)$ and the relation $f=g$ $\mu$-a.e. is an equivalence relation.
\end{remark}

\begin{defn}
	Let $(\Omega, \mathcal{F}, \mu)$ be a charge space and let $f$ be a real valued function  on $\Omega$. Define 
	\begin{equation}
	\bm{\norm{f}_{\mathcal{F}(\Omega)}}=\min\left( 1, \inf_{a>0}\left\{\max\left[a, \mu^*(\{|f|>a\})\right]\right\}\right).
	\end{equation}
\end{defn}

Note that $\norm{\cdot}_{\mathcal{F}(\Omega)}$ is a semi-metric, that is to say there are non-zero functions, $f$, such that $\norm{f}_{\mathcal{F}(\Omega)}=0$. However, the quotient space of functions modulo the equivalence
\begin{equation}
f\sim g \quad \Leftrightarrow \quad \norm{f-g}_{\mathcal{F}(\Omega)}=0
\end{equation}
is a metric space. It is easy to show that convergence in  $\norm{\cdot}_{\mathcal{F}(\Omega)}$ is  convergence in outer charge. Similarly $\{f_n\}$ is Cauchy in $\norm{\cdot}_{\mathcal{F}(\Omega)}$ if and only if it is Cauchy in outer charge. Thus we consider the following:

\begin{defn}
	We denote the set of equivalence  classes of functions on $\Omega$ modulo $\mathcal{N}(\Omega, \mathcal{F}, \mu)$  by $\bm{F(\Omega, \mathcal{F}, \mu, \mathbb{R})}$ or just by $\bm{F(\Omega)}$. We denote the equivalence class of $ f$ by $[f]$. 
\end{defn}

	Note that $(F(\Omega),  \norm{\cdot}_{\mathcal{F}(\Omega)})$ is a metric space, with 
	
	\begin{equation}
	\norm{[f]}_{\mathcal{F}(\Omega)}=\norm{f}_{\mathcal{F}(\Omega)}.
	\end{equation}

We will not distinguish between sequences of functions which belong to the same equivalence class. We will also not distinguish between equivalence classes and their representatives.

\begin{defn}
	Let $(\Omega, \mathcal{F}, \mu)$ be a  charge space. We say that $f$ is a \textbf{simple} function if for some $n\in \mathbb{N}$, disjoint $E_j\in \mathcal{F}$ and some $\beta_j\in\mathbb{R}$, $f=\sum_{j=1}^{n}\beta_j I_{E_j}$. Since $\mathcal{F}$ is closed under finite unions one may assume that $\beta_j$ are distinct and write $f$ in the canonical form
	\begin{equation}
	f=\sum_{j=1}^{n}\beta_j I_{\{f=\beta_j\}}.
	\end{equation}
\end{defn}

Applying the standard completion of a metric space to the space of simple functions on a charge space  $(\Omega, \mathcal{F}, \mu)$ we obtain a complete metric space $\dot{T}M(\Omega)$, see \cite{Fefferman}. 

\begin{defn}

	We define an equivalence relation on sequences of simple functions  that are Cauchy in $\mu^*$, and say that $\{f_n\}$ and $\{g_n\}$  belong to the same equivalence class if
	\begin{equation}
	\label{eq:TMdot}
	\lim_{n\to \infty}\norm{f_n-g_n}_{\mathcal{F}(\Omega)}=0.
	\end{equation}
	 We denote the space of all equivalence classes of sequences of simple functions that are Cauchy  in $\mu^*$ by $\bm{\dot{T}M(\Omega, \mathcal{F}, \mu)}$.
\end{defn} 

\begin{defn}
	If $\{f_n\}\in \dot{T}M(\Omega)$, we define
	\begin{equation}
	\bm{\norm{\{f_n\}}_{\dot{T}M(\Omega)}}=\lim_{n\to \infty}\norm{f_n}_{\mathcal{F}(\Omega)}.
	\end{equation}
	We denote the null element of $\dot{T}M(\Omega, \mathcal{F}, \mu)$ by $\{0\}$.
\end{defn}

\begin{defn}\label{charge:def:order}
	Let  $\{f_n\}, \{g_n\} \in \dot{T}M(\Omega, \mathcal{F}, \mu)$. We say that $\{f_n\} \ge \{0\}$ if $\{f_n^-\}=\{0\}$. If $\{f_n-g_n\}\ge \{0\}$ we say that $\{f_n\}\ge \{g_n\}$.
	
\end{defn}

\begin{defn}
	We denote the closure of simple functions in the metric space $(F(\Omega), \norm{\cdot}_{\mathcal{F}(\Omega)})$ by $\bm{TM(\Omega)}$. If $f\in TM(\Omega)$ then we say that $f$ is \textbf{totally measurable}, or $ TM $-measurable\footnote{Rao and Rao \cite{Rao} refer to these functions as $ T_1 $-measurable}.

\end{defn}

	Note that $TM(\Omega, \mathcal{F}, \mu)$ is isometrically isomorphic to the subspace of $\dot{T}M(\Omega, \mathcal{F}, \mu)$ of those sequences, $ \{f_n\} $, for which there exists a function, $f$, s.t. $f_n\hto f$. 

\begin{remark}\label{remarkTM}
	Let $(\Omega, \mathcal{F}, \mu)$ be a  charge space and $\{f_n\}, \{g_n\} \in \dot{T}M(\Omega)$ 
	\begin{enumerate}
		\item If $\Psi :\mathbb{R} \to  \mathbb{R}$ is continuous then $\{\Psi(f_n)\}\} \in \dot{T}M(\Omega)$.
		\item If $f, g : \Omega \to\mathbb{R}$ we denote $\bm{f\lor g}=\max(f, g)$ and $\bm{f\land g}=\min(f, g)$. If $ \{f_n\}, \{g_n\} \in \dot{T}M(\Omega)$ then $\{f_n\lor g_n\} \in  \dot{T}M(\Omega)$ and $\{f_n\land g_n\} \in  \dot{T}M(\Omega)$.
		\item For any $c_1, c_2 \in \mathbb{R}$, $\{c_1f_n+c_2 g_n\} \in \dot{T}M(\Omega)$.
	\end{enumerate}
\end{remark}

The following example shows that   $ TM(\Omega)$ may fail to be a complete space, and since $ TM(\Omega)$ is a closed subspace of  $\mathcal{F}(\Omega)$, the space $\mathcal{F}(\Omega)$ may fail to be complete as well. 
\begin{example}
\label{ex:1}
	Let $\mathcal{F}$ be the field of finite and co-finite subsets of $\mathbb{N}$. Let $\mu(\{n\})=2^{-n}$ and $\mu(\mathbb{N}\setminus \{n\})=5-2^{-n}$. Let $f_n=I_{(0, n]}$. Since $\sum_{n=1}^{\infty}2^{-n}=1$, it is clear that $\{f_n\}$ is a Cauchy sequence in  $\mu^*$ and  that there is no $f\in TM(\Omega)$ so that $f_n\hto f$.
\end{example}

\subsection{Integration on $TM(\Omega)$ and on its completion}

\begin{defn}
	Let $(\Omega, \mathcal{F}, \mu)$ be a charge space. We say that a simple function, $ f=\sum_{j=1}^{n}\beta_j I_{\{f=\beta_j\}}$, is \textbf{integrable}  if  $\forall \, \beta_j\ne 0$  $\mu(\{f=\beta_j\})<\infty$ and define the integral of $f$ by
	\begin{equation}
	\int_{\Omega} f \, d\mu =\sum_{j=1}^{n}\beta_j\mu(\{f=\beta_j\}).
	\end{equation} 
\end{defn}

The theory of integration in \cite{Rao} has been developed for $ TM(\Omega) $. For our purposes, it is more useful to consider $ \dot{T}M(\Omega) $. The fundamental facts of integration remain valid for $ \dot{T}M(\Omega) $ and proofs of the following properties for Cauchy sequences, rather than convergent ones, can be modified from the existing proofs or can easily be supplied. In particular, linear combinations of simple integrable functions are integrable, and Chebyshev's inequality for simple functions also holds in charge spaces.

\begin{defn}
	Let $(\Omega, \mathcal{F}, \mu)$ be a charge space.  If there exists a sequence, $\{f_n\}$, of integrable  simple functions on $\Omega$ s.t.
	$f_n \hto f$  and 
	\begin{equation}
	\label{eq:CauchyL1}
	\lim_{m,n\to \infty}\int |f_n-f_m|d\mu=0.
	\end{equation}
	then $f$ is said to be \textbf{integrable} and 
	\begin{equation}
	\int f d\mu =\lim_{n\to \infty} \int f_n d\mu.
	\end{equation}
	is called the integral of $f$. 	The sequence $\{f_n\}$ is called a \textbf{determining sequence} for $f$.
	
	If $\{f_n\}\in \dot{T}M(\Omega)$, each $f_n$ is integrable and (\ref{eq:CauchyL1}) holds, then we say that $\{f_n\}$ is integrable and define 
	\begin{equation}
	\int_{\Omega}\{f_n\} d\mu =\lim_{n\to \infty} \int f_n d\mu.
	\end{equation}

\end{defn}

Rao and Rao (\cite{Rao}, Proposition 4.4.10) prove that the integral of a $TM$-measurable function is independent of the determining sequence. That the integral of $\{f_n\}\in \dot{T}M(\Omega)$ is independent of the representative from its equivalence class can be proved similarly, or see \cite{Yarema} for an alternative proof.

\begin{defn}
	Let $(\Omega, \mathcal{F}, \mu)$ be a charge space and $\{f_n\} \in \dot{T}M(\Omega)$. We say that $\{f_n\}$ is integrable on a set $E\in \mathcal{F}$ if $\{f_n I_{E}\}$ is integrable and define
	\begin{equation}
	\int_{E}\{f_n\} d\mu =\lim_{n\to \infty} \int_{\Omega} \{f_n I_{E} \} d\mu .
	\end{equation}
\end{defn}

Similar to Remark \ref{remarkTM} for $\dot{T}M(\Omega)$, the integral on $ \dot{T}M(\Omega) $ is linear. Also, if $\{f_n\}$ and $\{g_n\}$ are integrable in $\dot{T}M(\Omega)$, then so are $\{f_n \vee g_n \}$ and $\{f_n \wedge g_n \}$.

\qquad

\subsection{$L^p$ and $L^{p,q}$ spaces of $TM(\Omega)$ and $\dot{T}M(\Omega)$ functions}

\begin{defn}[\cite{Fefferman}]
	 Let $(\Omega, \mathcal{F}, \mu)$ be a charge space and $\{f_n\} \in \dot{T}M(\Omega)$. For $0< p<\infty$, we say that  $\{f_n\}\in \bm{\dot{L}^p(\Omega, \mathcal{F}, \mu)}$ if each $f_n$ is integrable and
	\begin{equation}
	\label{eq:cauchyinLp}
	\lim_{m,n \to \infty}\int_{\Omega}\left|(|f_n|^p-|f_m|^p)\right| \, d\mu =0.
	\end{equation}
\end{defn}
Since
\begin{equation}
\lim_{m,n \to \infty}\left|\int_{\Omega}(|f_n|^p-|f_m|^p )\, d\mu\right|\leq \lim_{m,n \to \infty}\int_{\Omega}\left|(|f_n|^p-|f_m|^p)\right| \, d\mu =0.
\end{equation}
$\lim_{n \to \infty}\int_{\Omega}|f_n|^p\, d\mu$ exists.
\begin{defn}
	If $\{f_n\}\in \dot{L}^p(\Omega, \mathcal{F}, \mu)$ we define
	\begin{equation}
	\norm{\{f_n\}}_{\dot{L^p}(\Omega)}=\lim_{n\to \infty} \norm{f_n}_{L^p(\Omega)}.
	\end{equation}
\end{defn}

\begin{defn}
	Let $(\Omega, \mathcal{F}, \mu)$ be a charge space and $\{f_n\} \in \dot{T}M(\Omega)$. We say that $\{f_n\} \in \bm{\dot{L}^{\infty}}$ if there exists $ 0 < C < \infty $ s.t.
	\begin{equation}
	\lim_{n\to \infty} \mu^*\left(\{|f_n|>C\}\right)=0.
	\end{equation}
	If $\{f_n\}\in \dot{L}^{\infty}$ we define
	\begin{equation}
	\norm{\{f_n\}}_{\dot{L^{\infty}}(\Omega)}=\inf \{C>0 : \,  \lim_{n\to \infty} \mu^*\left(\{|f_n|>C\}\right)=0\}.
	\end{equation}
\end{defn}

\begin{defn}	
Define $\bm{L^p(\Omega, \mathcal{F}, \mu)}$ to be the space of all $TM$-measurable functions, $f$, s.t. $|f|^p$ is integrable and 
	\begin{equation}
	\label{eq:erifj}
	\bm{\norm{f}_{L^p(\Omega)}}=\left(\int_{\Omega} |f|^p d\mu \right)^{1/p}<\infty
	\end{equation} 
	We define $\bm{L^{\infty}(\Omega, \mathcal{F}, \mu)}$ to be the space of $TM$-measurable functions that differ from a bounded function by a null function and define the $L^{\infty}$ norm of a $TM$-measurable function, $f$, by
	\begin{equation}
	\bm{\norm{f}_{L^{\infty}(\Omega)}}=\inf\{C >0 : \, \mu^{*}(\{|f|>C\})=0 \}.
	\end{equation}
\end{defn}

Note that for $0 < p \leq \infty$, $L^p(\Omega, \mathcal{F}, \mu)$ is isometrically isomorphic to the subspace of $\dot{L}^p(\Omega, \mathcal{F}, \mu)$ of those sequences, $ \{f_n\} $, for which there exists a function, $f$, s.t. $f_n\hto f$.

\begin{defn}
	Let $(\Omega, \mathcal{F}, \mu)$ be a  charge space and $f : \Omega\to \mathbb{R}$. For $t>0$ let 
	\begin{equation}
	\mu_f(t)=\mu^{*}(\{|f|>t\})
	\end{equation}
	The function $\mu_f :\mathbb{R}^+ \to \mathbb{R}^+$ is called the \textbf{distribution of} $\bm{f}$ on $(\Omega, \mathcal{F}, \mu)$.
\end{defn}

Clearly, $\mu_f=\mu_{|f|}$.

\begin{thm}
	\label{lemma:3}
	Let $f$ and $g$ be simple integrable functions on a charge space $(\Omega, \mathcal{F}, \mu)$. Then:
	\begin{enumerate}
		
		\item The distribution $\mu_f$ is a decreasing integrable function and 
		\begin{equation}
		\int_{\Omega}|f|d\mu=\int_0^{\infty} \mu_{f}(t)dt;
		\end{equation}

		\item If $f, g \ge 0$ or $f, g \le 0$ then
		\begin{equation}
		\int_{\Omega}\left|f-g\right|d\mu=\int_0^{\infty} |\mu_{f\lor g}(t)-\mu_{f\land g}(t)|dt.
		\end{equation}
		
		\item 
		\begin{equation}
		\int_0^{\infty} |\mu_{f}(t)-\mu_{g}(t)|dt\le \int_{\Omega}|(|f|-|g|)|d\mu.
		\end{equation}
	\end{enumerate}
\end{thm}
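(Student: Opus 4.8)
The plan is to establish part (1) as the fundamental ``layer-cake'' identity for simple functions and then deduce parts (2) and (3) from it, using only finite additivity of $\mu$ and the elementary numerical identity $\max(a,b)-\min(a,b)=|a-b|$. No limiting arguments are needed, since everything in sight is a step function.

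For part (1), I would first record that because $f$ is simple and integrable, each level set $\{|f|>t\}$ lies in $\mathcal{F}$, so $\mu_f(t)=\mu^*(\{|f|>t\})=\mu(\{|f|>t\})$, and $\mu(\{f\ne 0\})<\infty$. Monotonicity of $\mu^*$ on the nested family $\{|f|>t\}$ shows $\mu_f$ is decreasing, and since $|f|$ takes finitely many values $0=a_0<a_1<\cdots<a_m$, the function $\mu_f$ is a bounded step function supported on $[0,a_m]$, hence integrable. Writing $A_k=\{|f|=a_k\}$, one sees $\mu_f$ equals $\sum_{j>k}\mu(A_j)$ on $[a_k,a_{k+1})$; summing the areas of these rectangles and interchanging the two finite sums gives $\int_0^\infty\mu_f(t)\,dt=\sum_k a_k\mu(A_k)=\int_\Omega|f|\,d\mu$.

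For part (2), assume first $f,g\ge 0$. Then $|f-g|=(f\lor g)-(f\land g)$ pointwise, and $f\lor g,\ f\land g$ are nonnegative simple integrable functions with $f\land g\le f\lor g$. Linearity of the integral together with part (1) gives $\int_\Omega|f-g|\,d\mu=\int_0^\infty\mu_{f\lor g}(t)\,dt-\int_0^\infty\mu_{f\land g}(t)\,dt$. Since $\{f\land g>t\}\subseteq\{f\lor g>t\}$, monotonicity yields $\mu_{f\land g}(t)\le\mu_{f\lor g}(t)$, so the integrand already equals its absolute value, which is the claim. The case $f,g\le 0$ reduces to this one by setting $f'=-f,\ g'=-g\ge 0$: as the distribution depends only on $|\cdot|$ one checks $\mu_{f\lor g}=\mu_{f'\land g'}$ and $\mu_{f\land g}=\mu_{f'\lor g'}$, while $|f-g|=|f'-g'|$.

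Part (3) is where the charge structure is actually used. Replacing $f,g$ by $|f|,|g|$ (legitimate since $\mu_f=\mu_{|f|}$ and the right side is unchanged) reduces to $F,G\ge 0$. By part (2), $\int_\Omega|F-G|\,d\mu=\int_0^\infty(\mu_{F\lor G}(t)-\mu_{F\land G}(t))\,dt$, so it suffices to prove the pointwise bound $|\mu_F(t)-\mu_G(t)|\le\mu_{F\lor G}(t)-\mu_{F\land G}(t)$. With $A=\{F>t\}$ and $B=\{G>t\}$, finite additivity gives $\mu(A)-\mu(B)=\mu(A\setminus B)-\mu(B\setminus A)$ and $\mu(A\cup B)-\mu(A\cap B)=\mu(A\setminus B)+\mu(B\setminus A)$, whence $|\mu(A)-\mu(B)|\le\mu(A\cup B)-\mu(A\cap B)$, which is exactly the required inequality; integrating in $t$ finishes the proof. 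The only delicate point, and the step I expect to be the \emph{main obstacle} to present cleanly, is making sure every set entering these cancellations has finite charge so that the subtractions are meaningful; this holds for all $t>0$ by integrability of $f$ and $g$, and the single value $t=0$ is irrelevant to the integral over $(0,\infty)$.
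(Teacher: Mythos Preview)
Your argument is correct and follows essentially the same route as the paper: Part~1 is the layer-cake identity for simple functions, Part~2 uses $|f-g|=(f\lor g)-(f\land g)$ together with Part~1 and the monotonicity $\mu_{f\land g}\le\mu_{f\lor g}$, and Part~3 is the pointwise sandwich $\mu_{|f|\land|g|}\le\mu_{|f|},\mu_{|g|}\le\mu_{|f|\lor|g|}$ integrated via Part~2. Your only cosmetic difference is in Part~3, where you reach that pointwise bound through the explicit set-difference identity $|\mu(A)-\mu(B)|\le\mu(A\cup B)-\mu(A\cap B)$, whereas the paper simply invokes monotonicity of the distribution on the chain $|f|\land|g|\le|f|,|g|\le|f|\lor|g|$; these are the same inequality.
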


\begin{proof}
$ $\newline

1. Let $f=\sum_{k=1}^{N}c_k I_{\{f=c_k\}}$, with $0=c_0 < |c_1| < ... < |c_N|$. For $k=1,...,N$ let $\alpha_k=|c_k|-|c_{k-1}|$ and $E_k=\{|f|\ge |c_k|\}$. Then
\begin{equation}
|f|=\sum_{k=1}^{N}\alpha_k I_{E_k} \quad \text{with }  \, \alpha_k>0, \, E_{k+1}\subset E_k, \, E_k\in \Omega
\end{equation}
and so, applying  summation by parts,
\begin{equation}
\int_{\Omega}|f|d\mu=\sum_{k=1}^{N}|c_k| \mu(\{f=c_k\})=\sum_{k=1}^{N}\alpha_k \mu(E_k)=\int_0^{\infty} \mu_{f}(t)dt.
\end{equation}

2.  It is enough to consider the case $f, g \ge 0$. Observe that for any $w\in \Omega$, $f(w)\land g(w)$ is one of $f(w)$ and $g(w)$, and   $f(w)\lor g(w)$ is the other. Since $f, g\ge 0$, by Part 1,
\begin{equation}
\begin{aligned}
\int_0^{\infty} |\mu_{f\lor g}(t)-\mu_{f\land g}(t)|dt &=\int_0^{\infty} [\mu_{f\lor g}(t)-\mu_{f\land g}(t)]dt\\&=\int_{\Omega}\left(f\lor g-f\land g\right) d\mu\\&=\int_{\Omega}\left|f\lor g-f\land g\right| d\mu\\&=\int_{\Omega}|f-g| d\mu.
\end{aligned}
\end{equation}   

3. Since  $|f|\land |g|  \le |f|, |g|\le |f|\lor |g|$, it follows that $\mu_{|f| \land |g|}\le \mu_{|f|}, \mu_{|g|}\le \mu_{|f|\lor |g|}$ and  so by part 2,
\begin{equation}
\begin{aligned}
\int_0^{\infty} |\mu_{f}(t)-\mu_{g}(t)|dt&=\int_0^{\infty} |\mu_{|f|}(t)-\mu_{|g|}(t)|dt\\&\le\int_0^{\infty} [\mu_{|f|\lor |g|}(t)-\mu_{|f|\land |g|}(t)]dt =\int_{\Omega}|(|f|-|g|)|d\mu.
\end{aligned}
\end{equation}

\end{proof}

\begin{cor}\label{lemma:3cor}
    Let $f$ and $g$ be simple integrable functions on a charge space $(\Omega, \mathcal{F}, \mu)$. Then for any $\delta \ge 0$,
    \begin{equation}
	\begin{aligned}
	\int_{\delta}^{\infty} |\mu_{f_n}(t)-\mu_{g_n}(t)|dt\le \int_{\{|f_n|>\delta\}\cup\{|g_n|>\delta\}}|(|f_n|-|g_n|)|d\mu
	\end{aligned}
	\end{equation}
\end{cor}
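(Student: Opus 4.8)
The plan is to deduce the corollary from part 3 of Theorem \ref{lemma:3} applied to truncated functions, rather than re-running the summation-by-parts argument from scratch. Fix $\delta \ge 0$ and set $f_\delta = (|f|-\delta)^+$ and $g_\delta = (|g|-\delta)^+$, where $(\cdot)^+$ denotes positive part. First I would verify that $f_\delta$ and $g_\delta$ are again simple integrable functions: writing $|f|=\sum_j |\beta_j| I_{\{f=\beta_j\}}$, the function $f_\delta$ takes the value $(|\beta_j|-\delta)^+$ on $\{f=\beta_j\}$, which is nonzero only when $|\beta_j|>\delta\ge 0$, forcing $\beta_j\ne 0$ and hence $\mu(\{f=\beta_j\})<\infty$; the same holds for $g_\delta$. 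Since $f_\delta,g_\delta\ge 0$, Theorem \ref{lemma:3}(3) applies and gives
\[
\int_0^{\infty}|\mu_{f_\delta}(t)-\mu_{g_\delta}(t)|\,dt \le \int_{\Omega}|f_\delta-g_\delta|\,d\mu .
\]

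Next I would rewrite each side. For the left-hand side, observe that for every $t>0$ we have $\{f_\delta>t\}=\{(|f|-\delta)^+>t\}=\{|f|>t+\delta\}$, so $\mu_{f_\delta}(t)=\mu_f(t+\delta)$, and likewise $\mu_{g_\delta}(t)=\mu_g(t+\delta)$; the single point $t=0$ does not affect the integral. The substitution $s=t+\delta$ then converts the left-hand side into $\int_{\delta}^{\infty}|\mu_f(s)-\mu_g(s)|\,ds$, which is exactly the quantity to be bounded. For the right-hand side I would invoke two elementary pointwise properties of the map $x\mapsto (x-\delta)^+$ on $[0,\infty)$: it is $1$-Lipschitz, so $|f_\delta(w)-g_\delta(w)|\le \big||f(w)|-|g(w)|\big|$ for every $w$; and it vanishes whenever its argument is $\le\delta$, so $f_\delta-g_\delta$ is identically zero off $\{|f|>\delta\}\cup\{|g|>\delta\}$. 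These give
\[
\int_{\Omega}|f_\delta-g_\delta|\,d\mu=\int_{\{|f|>\delta\}\cup\{|g|>\delta\}}|f_\delta-g_\delta|\,d\mu\le\int_{\{|f|>\delta\}\cup\{|g|>\delta\}}\big||f|-|g|\big|\,d\mu ,
\]
and chaining the three displays yields the asserted inequality.

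The steps requiring genuine care are bookkeeping rather than conceptual, and that is where I expect the only real friction: confirming that the truncations $f_\delta,g_\delta$ remain simple and integrable so that Theorem \ref{lemma:3}(3) may legitimately be invoked, and verifying the distribution identity $\mu_{f_\delta}(t)=\mu_f(t+\delta)$ together with the resulting change of variable. Once these are in place, the passage to the truncated domain of integration on the right is immediate from the Lipschitz bound and the support statement, so no estimate beyond Theorem \ref{lemma:3} is needed. (Throughout I abbreviate the fixed simple integrable functions $f_n,g_n$ of the statement by $f,g$.)
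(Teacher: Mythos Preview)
Your proposal is correct and follows essentially the same route as the paper: define the truncations $(|f|-\delta)^+=(|f|-\delta)\lor 0$, apply Theorem~\ref{lemma:3}(3) to them, use the distribution identity $\mu_{(|f|-\delta)^+}(t)=\mu_f(t+\delta)$ together with the substitution $s=t+\delta$, and finish with the $1$-Lipschitz and support properties of $x\mapsto(x-\delta)^+$. Your write-up is in fact a bit more careful than the paper's, since you explicitly check that the truncations remain simple and integrable before invoking Theorem~\ref{lemma:3}.
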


\begin{proof}
    \begin{equation}
	\begin{aligned}
	\int_{\delta}^{\infty} |\mu_{f_n}(t)-\mu_{g_n}(t)|dt &= \int_{0}^{\infty} |\mu_{f_n}(t+\delta)-\mu_{g_n}(t+\delta)|dt \\&\le \int_{0}^{\infty} |\mu_{(|f_n|-\delta)\lor 0}(t)-\mu_{(|g_n|-\delta)\lor 0}(t)|dt \\ &\le \int_{\Omega}|(|f_n|-\delta)\lor 0-(|g_n|-\delta)\lor 0|d\mu \\&= \int_{\Omega}|(|f_n|\lor\delta)-\delta-(|g_n|\lor\delta-\delta)|d\mu\\&\le \int_{\{|f_n|>\delta\}\cup\{|g_n|>\delta\}}|(|f_n|-|g_n|)|d\mu
	\end{aligned}
	\end{equation}

\end{proof}

We will work with the $L^{p,q}$ space  over a charge space. Let us first review this concept on a measure space.
\begin{defn}
	Let $(\Omega, \Sigma, \mu)$ be a  measure space and let $f : \Omega\to \mathbb{R}$ be a measurable function. The \textbf{decreasing rearrangement} of $f$, is defined:
	\begin{equation}
	\bm{f^*(t)}=\inf\{\gamma: \mu_f(\gamma)\le t\}, \qquad t>0
	\end{equation}
	
\end{defn}

\begin{defn}
	Let $(\Omega, \Sigma, \mu)$ be a  measure space. For $0<p, q< \infty$ and a measurable function $f : \Omega\to \mathbb{R}$ let
	\begin{equation}
	\norm{f}_{L^{p,q}(\Omega)}=\left(\int_{0}^{\infty} \left[t^{1/p}f^*(t)\right]^q \frac{dt}{t}\right)^{1/q}
	\end{equation}
	and for $0<p \le \infty$
	\begin{equation}
	\norm{f}_{L^{p,\infty}(\Omega)}= \lim_{q\to \infty}\norm{f}_{L^{p,q}(\Omega)}=\sup \left[t^{1/p}f^*(t)\right].
	\end{equation}
	The space of all measurable functions, $f$, such that $\norm{f}_{L^{p,q}(\Omega)}<\infty$ is  denoted by $\bm{L^{p,q}(\Omega, \Sigma, \mu)}$.
\end{defn}

 One can also define $\norm{f}_{L^{p,q}(\Omega)}$ in terms of  $\mu_f$ rather than $f^*$.  This definition is convenient for functions on charge spaces.
\begin{thm}[\cite{Grafakos}, Proposition 1.4.9]
	\label{thm: Lpq for measures}
	Let $(\Omega, \Sigma, \mu)$ be a  measure space. If $0<p < \infty, \; 0 < q \leq \infty$ and $f \in L^{p,q}(\Omega, \Sigma, \mu)$ then
	\begin{equation}
	\label{eq:pap2308}
	\norm{f}_{L^{p,q}(\Omega)}=\left(p\int_{0}^{\infty}[\mu_f(s)]^{\frac{q}{p}}s^{q-1}ds\right)^{1/q}=\left(\frac{p}{q}\int_{0}^{\infty}[\mu_{|f|^q}(t)]^{\frac{q}{p}} dt\right)^{1/q}.
	\end{equation}
	and
	\begin{equation}
	    \norm{f}_{L^{p,\infty}(\Omega)}=\lim_{q\to \infty}\norm{f}_{L^{p,q}(\Omega)}=\sup_{t>0}\{ t \mu_{f}^{\frac{1}{p}}(t)\}
	\end{equation}
	
\end{thm}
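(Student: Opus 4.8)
The plan is to reduce all three assertions to one structural fact: the decreasing rearrangement $f^*$ and the distribution $\mu_f$ are generalized inverses of one another. Concretely, I would first establish the pointwise equivalence
\begin{equation}
f^*(t) > s \iff t < \mu_f(s), \qquad s, t > 0,
\end{equation}
which follows directly from the definition $f^*(t)=\inf\{\gamma : \mu_f(\gamma)\le t\}$ together with the monotonicity and right-continuity of $\mu_f$. An immediate consequence is that, viewed as a function on $(0,\infty)$ equipped with Lebesgue measure $\lambda$, $f^*$ is equimeasurable with $|f|$, i.e. $\lambda(\{t>0 : f^*(t)>s\}) = \mu_f(s)$ for every $s>0$; equivalently $\{t : f^*(t)>s\}=(0,\mu_f(s))$ up to a null set.

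With this in hand, the first identity is a layer-cake plus Tonelli computation. Starting from $\norm{f}_{L^{p,q}(\Omega)}^q = \int_0^\infty t^{q/p-1} f^*(t)^q\,dt$, I would write $f^*(t)^q = q\int_0^\infty s^{q-1}\,I_{\{s<f^*(t)\}}\,ds$, interchange the order of integration (every integrand is nonnegative, so Tonelli applies with no integrability hypothesis), and use the equimeasurability to evaluate the inner $t$-integral over $\{t : f^*(t)>s\}=(0,\mu_f(s))$:
\begin{equation}
\int_0^{\mu_f(s)} t^{q/p-1}\,dt = \frac{p}{q}\,\mu_f(s)^{q/p}.
\end{equation}
This yields $\norm{f}_{L^{p,q}(\Omega)}^q = p\int_0^\infty \mu_f(s)^{q/p} s^{q-1}\,ds$, the first expression in (\ref{eq:pap2308}). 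The second expression then follows from the substitution $t=s^q$, under which $dt = q\,s^{q-1}\,ds$ and $\mu_{|f|^q}(t)=\mu^*(\{|f|>t^{1/q}\})=\mu_f(t^{1/q})=\mu_f(s)$, so that $p\int_0^\infty \mu_f(s)^{q/p}s^{q-1}\,ds$ becomes $\frac{p}{q}\int_0^\infty \mu_{|f|^q}(t)^{q/p}\,dt$.

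For the $q=\infty$ case I would argue directly from the same equivalence rather than by passing to the limit: for fixed $s$, $f^*(t)\ge s$ on the entire interval $(0,\mu_f(s))$, so letting $t\uparrow\mu_f(s)$ gives $\sup_{t>0} t^{1/p}f^*(t) \ge s\,\mu_f(s)^{1/p}$, and taking the supremum over $s$ yields ``$\ge$''; the reverse inequality is obtained symmetrically by fixing $t$ and setting $s=f^*(t)$, using $\mu_f(f^*(t))\ge t$. I expect the main point requiring care, rather than genuine difficulty, to be the boundary bookkeeping in this supremum identity and the justification of the defining equivalence at the values of $s$ where $\mu_f$ jumps or is locally constant---precisely the places where right-continuity and the infimum in the definition of $f^*$ must be invoked. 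The integral identities themselves are robust: because Tonelli handles the possibly infinite nonnegative integrals automatically, no hypothesis beyond $f\in L^{p,q}(\Omega,\Sigma,\mu)$ is needed, and each equality holds as an identity in $[0,\infty]$.
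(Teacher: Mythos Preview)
The paper does not supply its own proof of this theorem; it is simply quoted from Grafakos (Proposition~1.4.9) and then used to \emph{motivate} the definition of $\norm{\cdot}_{L^{p,q}}$ on charge spaces via the distribution function. So there is nothing in the paper to compare your argument against, and your outline is exactly the standard route taken in Grafakos: establish the generalized-inverse relationship between $f^*$ and $\mu_f$, then run the layer-cake/Tonelli computation. That part is fine.

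One concrete correction in the $q=\infty$ step: the inequality $\mu_f(f^*(t))\ge t$ that you invoke for the ``$\le$'' direction is false in general. For $f=I_{[0,1]}$ on $(\mathbb{R},\lambda)$ and $t=\tfrac12$ one has $f^*(\tfrac12)=1$ but $\mu_f(1)=0<\tfrac12$. What is true, from the very equivalence $f^*(t)>s\iff t<\mu_f(s)$ you set up, is that $\mu_f(s)>t$ for every $s<f^*(t)$; letting $s\uparrow f^*(t)$ then gives
\[
\sup_{s>0}\,s\,\mu_f(s)^{1/p}\;\ge\;\lim_{s\uparrow f^*(t)} s\,\mu_f(s)^{1/p}\;\ge\;f^*(t)\,t^{1/p},
\]
which is the bound you want. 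You already flag this ``boundary bookkeeping'' as the delicate point, so this is a repair rather than a gap, but the specific inequality you wrote should be replaced by the limiting argument.
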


Motivated by Theorem \ref{thm: Lpq for measures} we make the following definitions for functions on charge spaces.
\begin{defn}
	Let $f$ be a simple integrable function on a charge space $(\Omega, \mathcal{F}, \mu)$. For $ 0<p, q < \infty$ let 
	\begin{equation}
	\label{eq:pap2308a}
	\bm{\norm{f}_{L^{p,q}(\Omega)}}=\left(p\int_{0}^{\infty}[\mu_f(s)]^{\frac{q}{p}}s^{q-1}ds\right)^{1/q}.
	\end{equation}
	For $0<p<\infty$,
	\begin{equation}
	\bm{\norm{f}_{L^{p,\infty}(\Omega)}}=\sup_{t>0}\{ t \mu_{f}^{\frac{1}{p}}(t)\}.
	\end{equation}
\end{defn}

Since for $q < \infty $,
	\begin{equation}
	\label{eqdistdef}
	\mu_{f}(t)=\mu_{|f|^q}(t^q),
	\end{equation}
it follows that for any $0\le a< b\le \infty$
\begin{equation}
\int_{a}^{b}[\mu_f(s)]^{\frac{q}{p}}s^{q-1}ds=\frac{1}{q}\int_{a^q}^{b^q}[\mu_{|f|^q}(t)]^{\frac{q}{p}} dt,
\end{equation}
and so one may define $\norm{f}_{L^{p,q}(\Omega)}$  in terms of  $\mu_{|f|^q}$ instead of $\mu_f$.
	
\begin{thm}
\label{thm:LpqEquiv}
	Let $(\Omega, \mathcal{F}, \mu)$ be a charge space and $0<p, q<\infty$.  Let $f, g$ be  simple integrable functions.
	Then 
	\begin{equation}
	\label{eq:pap17a}
	\int_0^{\infty}|\mu^{q/p}_{f}(t)-\mu^{q/p}_{g}(t)|t^q \frac{dt}{t} = \frac{1}{q}\int_{0}^{\infty} |\mu^{q/p}_{|f|^q}(t)-\mu^{q/p}_{|g|^q}(t)|dt.
	\end{equation}

\end{thm}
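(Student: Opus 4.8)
The plan is to reduce the identity to a single change of variables, exploiting the pointwise relation (\ref{eqdistdef}) between the distribution functions of $f$ and of $|f|^q$. First I would recall that for $t>0$ the sets $\{|f|>t\}$ and $\{|f|^q>t^q\}$ coincide, since $x\mapsto x^q$ is strictly increasing on $[0,\infty)$; hence $\mu_f(t)=\mu_{|f|^q}(t^q)$, and raising both sides to the power $q/p$ gives
\begin{equation}
[\mu_f(t)]^{q/p}=[\mu_{|f|^q}(t^q)]^{q/p}, \qquad [\mu_g(t)]^{q/p}=[\mu_{|g|^q}(t^q)]^{q/p}.
\end{equation}

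Substituting these into the integrand on the left-hand side, I obtain
\begin{equation}
\left|\mu_f^{q/p}(t)-\mu_g^{q/p}(t)\right|=\left|[\mu_{|f|^q}(t^q)]^{q/p}-[\mu_{|g|^q}(t^q)]^{q/p}\right|=h(t^q),
\end{equation}
where I set $h(u)=\left|\mu_{|f|^q}^{q/p}(u)-\mu_{|g|^q}^{q/p}(u)\right|$. The point worth emphasizing is that, because both distribution functions are composed with the \emph{same} monotone map $t\mapsto t^q$, the absolute value of the difference is itself a function of $t^q$; this is what allows the argument to go through despite the absolute value inside the integral, even though the change-of-variables display recorded just before the theorem was stated only for a single distribution function.

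The left-hand side is therefore $\int_0^\infty h(t^q)\,t^{q-1}\,dt$, and I would finish by the substitution $u=t^q$, for which $du=q\,t^{q-1}\,dt$ and the limits $0,\infty$ are preserved since $q>0$. This yields
\begin{equation}
\int_0^\infty h(t^q)\,t^{q-1}\,dt=\frac{1}{q}\int_0^\infty h(u)\,du=\frac{1}{q}\int_0^\infty\left|\mu_{|f|^q}^{q/p}(u)-\mu_{|g|^q}^{q/p}(u)\right|\,du,
\end{equation}
which is exactly the right-hand side.

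I do not expect any serious obstacle: the statement is, at bottom, the change-of-variables formula already recorded (without absolute values) in the display preceding the theorem, now applied to the single nonnegative integrand $h$. The only points needing care are the validity of the pointwise identity $\mu_f(t)=\mu_{|f|^q}(t^q)$, which rests on the monotonicity of $x\mapsto x^q$ on $[0,\infty)$, and the finiteness of the integrals so that the substitution is legitimate. Both are guaranteed because $f$ and $g$ are simple and integrable, so that $\mu_f$ and $\mu_g$ are decreasing integrable functions by Theorem \ref{lemma:3}.
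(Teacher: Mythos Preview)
Your proof is correct and in fact cleaner than the paper's. The paper argues by first choosing finitely many points $0=a_1<\dots<a_n=\infty$ on each of which $\mu_f^{q/p}-\mu_g^{q/p}$ has constant sign (possible because $f,g$ are simple, so the distribution functions are step functions), then drops the absolute value on each subinterval, applies the change of variables $s=t^q$ interval by interval, and reassembles. Your observation that both $\mu_f^{q/p}(t)$ and $\mu_g^{q/p}(t)$ are \emph{simultaneously} compositions with $t\mapsto t^q$, so that the absolute value of their difference is already of the form $h(t^q)$, bypasses the sign decomposition entirely and reduces the identity to a single global substitution. Your route is more elementary, does not actually use the simplicity of $f$ and $g$ in the core step, and makes transparent why the absolute value causes no difficulty; the paper's route, by contrast, leans on simplicity to guarantee only finitely many sign changes.
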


\begin{proof}
	There exist $0=a_1<a_2<\dots <a_{n-1} < a_n=\infty$ so that on each interval $(a_j, a_{j+1})$ the function $\mu^{q/p}_{f}-\mu^{q/p}_{g}$ does not change signs. Then
	\begin{equation}
	\begin{aligned}
	\int_{0}^{\infty} |\mu^{q/p}_{f}(t)-\mu^{q/p}_{g}(t)|t^q \frac{dt}{t}&=\sum_{j=1}^{n-1}\int_{a_j}^{a_{j+1}} |\mu^{q/p}_{f}(t)-\mu^{q/p}_{g}(t)|t^q \frac{dt}{t}\\&=\sum_{j=1}^{n-1}\left|\int_{a_j}^{a_{j+1}} [\mu^{q/p}_{f}(t)-\mu^{q/p}_{g}(t)]t^q \frac{dt}{t}\right|\\&=\frac{1}{q}\sum_{j=1}^{n-1}\left|\int_{a^q_j}^{a^q_{j+1}} [\mu^{q/p}_{|f|^q}(s)-\mu^{q/p}_{|g|^q}(s)] ds\right|\\&=\frac{1}{q}\int_{0}^{\infty} |\mu^{q/p}_{|f|^q}(s)-\mu^{q/p}_{|g|^q}(s)| ds.
	\end{aligned}
	\end{equation}
	
\end{proof}

\begin{lemma}
	\label{lemma: fth3}
	Let  $(\Omega, \mathcal{F}, \mu)$ be a charge space. Let $\{f_n\},\{g_n\} \in \dot{T}M(\Omega)$ be such that $\forall \epsilon>0$,
	\begin{equation}
	\label{eq:fth2b}
	\lim_{n\to \infty}\mu(\{|f_n-g_n|> \epsilon\})=0.
	\end{equation}
	If there exist $\delta\ge 0$ and $ C_{\delta}, M >0$, such that $\forall n\in \mathbb{N}$,
	\begin{equation}
	\label{eq:fth1a}
	|f_n|\le M, \quad \mu_{f_n}(\delta)\le C_{\delta}, \quad \text{and} \quad 	|g_n|\le M, \quad \mu_{g_n}(\delta)\le C_{\delta}.
	\end{equation}
	then for any  $0<r<\infty$,
	\begin{equation}
	\label{eq:fth3}
	\lim_{n\to \infty}\int_{\delta}^{\infty} |\mu_{f_n}^{r}(t)-\mu_{g_n}^{r}(t)|dt=0.
	\end{equation}
	
\end{lemma}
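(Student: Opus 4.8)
The plan is to reduce the general statement to the single case $r=1$ and then recover arbitrary $r$ from elementary estimates on the map $x\mapsto x^r$. Two preliminary observations from (\ref{eq:fth1a}) set this up. Since each $\mu_{f_n}$ is decreasing, the bound $\mu_{f_n}(\delta)\le C_\delta$ forces $0\le \mu_{f_n}(t)\le C_\delta$ for all $t\ge\delta$ (and likewise for $g_n$), while $|f_n|,|g_n|\le M$ forces $\mu_{f_n}(t)=\mu_{g_n}(t)=0$ for $t\ge M$. Hence every integral $\int_\delta^\infty$ below is in fact taken over the finite interval $[\delta,M]$, and both distribution functions take values in $[0,C_\delta]$ there.

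For $r=1$ I would start from Corollary~\ref{lemma:3cor}: writing $A_n:=\{|f_n|>\delta\}\cup\{|g_n|>\delta\}\in\mathcal{F}$,
\[
\int_\delta^\infty |\mu_{f_n}(t)-\mu_{g_n}(t)|\,dt\le \int_{A_n}\bigl||f_n|-|g_n|\bigr|\,d\mu\le \int_{A_n}|f_n-g_n|\,d\mu.
\]
The subtlety is that (\ref{eq:fth2b}) only gives convergence in charge, not in $L^1$, so I would fix $\epsilon>0$ and split $A_n$ according to whether $|f_n-g_n|\le\epsilon$ or $|f_n-g_n|>\epsilon$. On the first set the integrand is at most $\epsilon$ and $\mu(A_n)\le \mu_{f_n}(\delta)+\mu_{g_n}(\delta)\le 2C_\delta$ by subadditivity, contributing at most $2\epsilon C_\delta$; on the second set the integrand is at most $2M$ while the charge of the set is at most $\mu(\{|f_n-g_n|>\epsilon\})\to 0$ by hypothesis. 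Letting $n\to\infty$ and then $\epsilon\to 0$ gives $\int_\delta^\infty|\mu_{f_n}-\mu_{g_n}|\,dt\to 0$. This is exactly where the uniform bounds $M$ and $C_\delta$ are indispensable: they are what upgrade convergence in charge to convergence of these integrals.

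For $r\ge 1$ the map $x\mapsto x^r$ is Lipschitz on $[0,C_\delta]$ with constant $rC_\delta^{\,r-1}$, so $|\mu_{f_n}^r-\mu_{g_n}^r|\le rC_\delta^{\,r-1}\,|\mu_{f_n}-\mu_{g_n}|$ pointwise on $[\delta,\infty)$, and (\ref{eq:fth3}) follows at once from the $r=1$ case. The case $0<r<1$ is the main obstacle, since $x\mapsto x^r$ fails to be Lipschitz at the origin and the direct reduction breaks down. Here I would instead use subadditivity, $|a^r-b^r|\le|a-b|^r$ for $a,b\ge 0$, and then exploit the finiteness of the domain established above: by H\"older's inequality with exponents $1/r$ and $1/(1-r)$,
\[
\int_\delta^M |\mu_{f_n}^r-\mu_{g_n}^r|\,dt\le \int_\delta^M |\mu_{f_n}-\mu_{g_n}|^r\,dt\le (M-\delta)^{1-r}\left(\int_\delta^M |\mu_{f_n}-\mu_{g_n}|\,dt\right)^{r},
\]
which tends to $0$ by the $r=1$ case. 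Thus the uniform sup-norm bound $M$, through the finite length $M-\delta$, is precisely what rescues the sub-Lipschitz regime.
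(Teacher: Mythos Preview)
Your argument is correct. The $r=1$ case, via Corollary~\ref{lemma:3cor} and the split of $A_n$ according to $|f_n-g_n|\lessgtr\epsilon$, is exactly the estimate the paper also carries out. From there, however, the two proofs diverge. The paper handles all $0<r<\infty$ in one stroke: it sets $A_n^\epsilon=\{t:|\mu_{f_n}^r(t)-\mu_{g_n}^r(t)|>\epsilon\}$, uses uniform continuity of $x\mapsto x^r$ on $[0,C_\delta]$ to conclude $A_n^\epsilon\subset\{t:|\mu_{f_n}(t)-\mu_{g_n}(t)|>\alpha\}$, and then combines Chebyshev with the $r=1$ bound to get $\lambda(A_n^\epsilon\cap(\delta,\infty))\to 0$; on $A_n^\epsilon$ the integrand is at most $C_\delta^r$, and off it at most $\epsilon$. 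Your route instead isolates $r=1$ first and then reduces general $r$ pointwise: Lipschitz on $[0,C_\delta]$ for $r\ge 1$, and $|a^r-b^r|\le|a-b|^r$ followed by H\"older over the finite interval $[\delta,M]$ for $0<r<1$. Your approach is more elementary---no exceptional-set or Chebyshev argument in the $t$-variable---at the price of a case split, while the paper's uniform-continuity device avoids the split but introduces an extra layer (measure of $A_n^\epsilon$) between the hypothesis and the conclusion.
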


\begin{proof}
	By (\ref{eq:fth1a}),
	\begin{equation}
	\int_{\delta}^{\infty}|\mu_{f_n}^r(t)|dt\le C_{\delta}^r M \quad \text{and} \quad \int_{\delta}^{\infty}|\mu_{g_n}^r(t)|dt\le C_{\delta}^r M.
	\end{equation}
	For $\epsilon>0$ let $A^{\epsilon}_n=\{|\mu^r_{f_n}- \mu^{r}_{g_n}|>\epsilon\}$. Then 
	\begin{equation}
	\begin{aligned}
	\int_{\delta}^{\infty}|\mu_{f_n}^{r}(t)-\mu_{g_n}^{r}(t)|dt&=\int_{\delta}^{M}|\mu_{f_n}^{r}(t)-\mu_{g_n}^{r}(t)|dt\\&=\int_{[\delta,M]\cap A^{\epsilon}_n}|\mu_{f_n}^{r}(t)-\mu_{g_n}^{r}(t)|dt+\int_{[\delta,M]\cap (A^{\epsilon}_n)^c}|\mu_{f_n}^{r}(t)-\mu_{g_n}^{r}(t)|dt\\&\le \int_{[\delta,M]\cap A^{\epsilon}_n}|\mu_{f_n}^{r}(t)-\mu_{g_n}^{r}(t)|dt+\epsilon M.
	\end{aligned}
	\end{equation}
	Let us see that
	\begin{equation}
	\lim_{n\to \infty}\int_{[\delta,M]\cap A^{\epsilon}_n}|\mu_{f_n}^{r}(t)-\mu_{g_n}^{r}(t)|dt=0.
	\end{equation}
	For any $\epsilon>0$ there exists $0<\alpha<\epsilon$ so that  $|x-y|\le \alpha$ implies $|x^r-y^r|<\epsilon$. If $t\in A^{\epsilon}_n$ then $|\mu^r_{f_n}(t)-\mu^r_{g_n}(t)|>\epsilon$ and therefore $|\mu_{f_n}(t)-\mu_{g_n}(t)|>\alpha$.
	 
	By (\ref{eq:fth2b}), for any $\epsilon_1>0$ there exists $N\in \mathbb{N}$ so that for all $n\ge N$
	\begin{equation}
	\begin{aligned}
	\int_{\{|f_n|>\delta\}\cup\{|g_n|>\delta\}}|(|f_n|-|g_n|)|d\mu&=\int_{\{|f_n-g_n|>\epsilon_1\}\cap(\{|f_n|>\delta\}\cup\{|g_n|>\delta\})}|(|f_n|-|g_n|)|d\mu\\&+\int_{\{|f_n-g_n|\le \epsilon_1\}\cap(\{|f_n|>\delta\}\cup\{|g_n|>\delta\})}|(|f_n|-|g_n|)|d\mu\\&\le M\epsilon_1 + 2C_{\delta}\epsilon_1.
	\end{aligned}
	\end{equation}
	It follows that 
	\begin{equation}
	\lim_{n\to \infty}\int_{\{|f_n|>\delta\}\cup\{|g_n|>\delta\}}|(|f_n|-|g_n|)|d\mu=0.
	\end{equation}
	
	By Chebyshev's inequality and Corollary \ref{lemma:3cor}, for any $\alpha>0$,
	\begin{equation}
	\begin{aligned}
	\alpha \lambda(\{t>\delta : \, |\mu_{f_n}(t)-\mu_{g_n}(t)|>\alpha \})&\le \int_{\delta}^{\infty} |\mu_{f_n}(t)-\mu_{g_n}(t)|dt \le \int_{\{|f_n|>\delta\}\cup\{|g_n|>\delta\}}|(|f_n|-|g_n|)|d\mu
	\end{aligned}
	\end{equation}
	proving
	\begin{equation}
	\lim_{n\to \infty}\lambda(A^{\epsilon}_{n}\cap(\delta, \infty))=0
	\end{equation}
	and by (\ref{eq:fth1a}) 
	\begin{equation}
	\lim_{n\to \infty}\int_{[\delta,M]\cap A^{\epsilon}_n}|\mu_{g_n}^{r}(t)-\mu_{f_n}^{r}(t)|dt\le \lim_{n\to \infty} C_{\delta}^r \cdot \lambda(A^{\epsilon}_{n})=0.
	\end{equation}

\end{proof}

\begin{thm}
	\label{thm:Lpq1}
	 Let $(\Omega, \mathcal{F}, \mu)$ be a  charge space and $0<r<\infty$. Let $\{f_n\}, \{g_n\} \in \dot{T}M(\Omega, \mathcal{F}, \mu)$ be  sequences of  simple integrable functions so that (\ref{eq:fth2b}) holds. 
	  If also
	\begin{equation}
	\label{eq:pap36da}
	\lim_{m,n\to \infty}\int_{0}^{\infty} |\mu^{r}_{f_n}(t)-\mu^{r}_{f_m}(t)|dt=0, \qquad \lim_{m,n\to \infty}\int_{0}^{\infty} |\mu^{r}_{g_n}(t)-\mu^{r}_{g_m}(t)|dt=0,
	\end{equation}
	then
	\begin{equation}
	\label{eq:pap36ada}
	\lim_{n\to \infty} \int_{0}^{\infty}\mu^{r}_{f_n}(t) dt=\lim_{n\to \infty} \int_{0}^{\infty}\mu^{r}_{g_n}(t) dt.
	\end{equation}
\end{thm}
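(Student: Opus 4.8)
The plan is to prove the stronger statement $\lim_{n\to\infty}\int_0^\infty|\mu_{f_n}^r(t)-\mu_{g_n}^r(t)|\,dt=0$, from which (\ref{eq:pap36ada}) is immediate: the two hypotheses in (\ref{eq:pap36da}) say precisely that $\{\mu_{f_n}^r\}$ and $\{\mu_{g_n}^r\}$ are Cauchy in $L^1(0,\infty)$ (Lebesgue measure), so by the estimate used just after (\ref{eq:cauchyinLp}) both limits in (\ref{eq:pap36ada}) exist, while $\left|\int_0^\infty\mu_{f_n}^r-\int_0^\infty\mu_{g_n}^r\right|\le\int_0^\infty|\mu_{f_n}^r-\mu_{g_n}^r|\to 0$ forces them to agree. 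By completeness of $L^1(0,\infty)$ I would also record the $L^1$-limits $\phi=\lim_n\mu_{f_n}^r$ and $\psi=\lim_n\mu_{g_n}^r$, which enter only to extract the uniform bounds below.

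To estimate $\int_0^\infty|\mu_{f_n}^r-\mu_{g_n}^r|$, I would fix $\epsilon>0$ and split $(0,\infty)=(0,\delta)\cup[\delta,M)\cup[M,\infty)$, controlling each piece separately. Being Cauchy, the sequences are bounded in $L^1$, say $\sup_n\int_0^\infty\mu_{f_n}^r\le K$ and similarly for $g_n$; since $\mu_{f_n}$ is nonincreasing, $K\ge\int_0^\delta\mu_{f_n}^r\ge\delta\,\mu_{f_n}(\delta)^r$, which yields the uniform bound $\mu_{f_n}(\delta)\le(K/\delta)^{1/r}=:C_\delta$ (and likewise for $g_n$) needed by Lemma \ref{lemma: fth3}. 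The two outer pieces I would dispatch using the $L^1$-convergence to $\phi$ and $\psi$: comparing each $\mu_{f_n}^r$ with $\phi$ (and $\mu_{g_n}^r$ with $\psi$) and handling the finitely many initial indices separately — each $\mu_{f_n}^r$ being a bounded step function supported on a bounded interval — one can choose $\delta$ small and $M$ large so that $\sup_n\int_0^\delta\mu_{f_n}^r$, $\sup_n\int_M^\infty\mu_{f_n}^r$ and their $g$-counterparts are all below $\epsilon$. This bounds $\int_0^\delta|\mu_{f_n}^r-\mu_{g_n}^r|+\int_M^\infty|\mu_{f_n}^r-\mu_{g_n}^r|$ by $4\epsilon$ uniformly in $n$.

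The middle piece $\int_\delta^M|\mu_{f_n}^r-\mu_{g_n}^r|$ is where Lemma \ref{lemma: fth3} should enter, and arranging its hypotheses is the main obstacle: the lemma demands a uniform sup-norm bound $|f_n|\le M$, which the theorem does not supply. I would circumvent this by truncating at height $M$, setting $\tilde f_n=\mathrm{sgn}(f_n)(|f_n|\wedge M)$ and $\tilde g_n=\mathrm{sgn}(g_n)(|g_n|\wedge M)$; these are again simple integrable functions, and one checks $\mu_{\tilde f_n}(t)=\mu_{f_n}(t)$ for $t<M$ while $\mu_{\tilde f_n}(t)=0$ for $t\ge M$, so the middle integral is unchanged and $\int_\delta^M|\mu_{f_n}^r-\mu_{g_n}^r|=\int_\delta^M|\mu_{\tilde f_n}^r-\mu_{\tilde g_n}^r|\le\int_\delta^\infty|\mu_{\tilde f_n}^r-\mu_{\tilde g_n}^r|$. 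Since truncation is $1$-Lipschitz, $\{|\tilde f_n-\tilde g_n|>\epsilon\}\subseteq\{|f_n-g_n|>\epsilon\}$, so (\ref{eq:fth2b}) passes to $\tilde f_n,\tilde g_n$; together with $|\tilde f_n|,|\tilde g_n|\le M$ and $\mu_{\tilde f_n}(\delta),\mu_{\tilde g_n}(\delta)\le C_\delta$, all hypotheses of Lemma \ref{lemma: fth3} hold, and it gives $\lim_n\int_\delta^\infty|\mu_{\tilde f_n}^r-\mu_{\tilde g_n}^r|=0$. Combining the three estimates yields $\limsup_n\int_0^\infty|\mu_{f_n}^r-\mu_{g_n}^r|\le 4\epsilon$ for every $\epsilon>0$, whence the limit is $0$ and the theorem follows.
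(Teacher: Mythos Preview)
Your proof is correct and follows essentially the same approach as the paper's: both split $(0,\infty)$ into $(0,\delta)\cup[\delta,M)\cup[M,\infty)$, control the outer pieces via the Cauchy hypothesis (you phrase this through the $L^1$-limits $\phi,\psi$ while the paper anchors at a fixed Cauchy index $N_1$, which is equivalent), extract the uniform bound $\mu_{f_n}(\delta)\le C_\delta$ from monotonicity, and truncate at height $M$ to feed the middle piece into Lemma~\ref{lemma: fth3}. The only cosmetic differences are that the paper argues by contradiction and truncates via $|f_n|\wedge M$ rather than your sign-preserving version, neither of which affects the substance.
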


\begin{proof}

	Since 
	\begin{equation}
	\label{eq:pap36fa}
	\left|\int_{0}^{\infty} \mu^{r}_{f_n}(t) dt-\int_{0}^{\infty} \mu^{r}_{g_n}(t)dt\right|\le \int_{0}^{\infty} |\mu^{r}_{f_n}(t)-\mu^{r}_{g_n}(t)|dt
	\end{equation}
	if (\ref{eq:pap36ada}) fails, then  there exist   $\epsilon>0$ and  subsequences $\{f_{n_k}\}$ and $\{g_{n_k}\}$ that satisfy (\ref{eq:fth2b}), so that for all $k\in \mathbb{N}$, 
	\begin{equation}
	\label{eq:pap37da}
	\int_{0}^{\infty} \left|\mu^{r}_{f_{n_k}}(t)-\mu^{r}_{g_{n_k}}(t)\right|dt >\varepsilon.
	\end{equation}
	To simplify the notation, we will use $\{f_{n}\}$ and $\{g_{n}\}$ instead of $\{f_{n_k}\}$ and $\{g_{n_k}\}$.
 By (\ref{eq:pap36da}), there exists $N_1\in\mathbb{N}$ such that for all $n, m \ge N_1$,
	\begin{equation}
	\label{eq:pap38a}
	\begin{aligned}
	\int_{0}^{\infty}|\mu^{r}_{f_n}(t)-\mu^{r}_{f_m}(t)|dt+\int_{0}^{\infty}|\mu^{r}_{g_m}(t)-\mu^{r}_{g_n}(t)|dt<\epsilon/12
	\end{aligned}
	\end{equation}
	
	Let $M=\max\left\{\max(|f_{N_1}|), \max(|g_{N_1}|)\right\}$ and $S=\max\left\{\mu(supp(f_{N_1})), \mu(supp(g_{N_1}))\right\}$. 
	
Since $f_{N_1}$ and $g_{N_1}$ are simple integrable functions, $M, S< \infty$ and for all $t>M$, $\mu_{f_{N_1}}(t)=\mu_{g_{N_1}}(t)=0$. By (\ref{eq:pap38a}), for all $n>{N_1}$,
	\begin{equation}
	\label{eq:pap39da1}
	\begin{aligned}
	\int_{M}^{\infty}|\mu^{r}_{f_n}(t)-\mu^{r}_{g_n}(t)|dt&\le\int_{M}^{\infty}|\mu^{r}_{f_n}(t)|dt+\int_{M}^{\infty}|\mu^{r}_{g_n}(t)|dt\\&=\int_{M}^{\infty}|\mu^{r}_{f_n}(t)-\mu^{r}_{f_{N_1}}(t)|dt+\int_{M}^{\infty}|\mu^{r}_{g_n}(t)-\mu^{r}_{g_{N_1}}(t)|dt\\&\le\int_{0}^{\infty}|\mu^{r}_{f_n}(t)-\mu^{r}_{f_{N_1}}(t)|dt+\int_{0}^{\infty}|\mu^{r}_{g_n}(t)-\mu^{r}_{g_{N_1}}(t)|dt\\&< \epsilon/12.
	\end{aligned}
	\end{equation}
	
	Since  $|f_{N_1}|$ and $|g_{N_1}|$ are integrable functions there exist $\delta>0$ s.t.
	\begin{equation}
	\int_{0}^{\delta}\mu^{r}_{f_{N_1}}(t)dt<\epsilon/12 \quad \text{and} \quad \int_{0}^{\delta}\mu^{r}_{g_{N_1}}(t)dt<\epsilon/12.
	\end{equation}
	By (\ref{eq:pap36da}),  $\forall n>N_1$
	\begin{equation}
	\label{eq:pap39dab1}
	\begin{aligned}
	\int_{0}^{\delta}\mu^{r}_{f_n}(t)dt\le \int_{0}^{\delta}\mu^{r}_{f_{N_1}}(t)dt+\int_{0}^{\delta}|\mu^r_{f_n}-\mu^{r}_{f_{N_1}}(t)|dt < \epsilon/6,
	\end{aligned}
	\end{equation}
	\begin{equation}
	\label{eq:pap39dab1a}
	\begin{aligned}
	\int_{0}^{\delta}\mu^{r}_{g_n}(t)dt\le \int_{0}^{\delta}\mu^{r}_{g_{N_1}}(t)dt+\int_{0}^{\delta}|\mu^r_{g_n}-\mu^{r}_{g_{N_1}}(t)|dt < \epsilon/6
	\end{aligned}
	\end{equation}
	and therefore
	\begin{equation}
	\label{eq:pap39dab1ab}
	\int_{0}^{\delta}|\mu^{r}_{f_n}(t)-\mu^{r}_{g_n}(t)|dt<\epsilon/3.
	\end{equation}
	Let us see that 
	\begin{equation}
	\lim_{n \to \infty} \int_{\delta}^{M}|\mu^{r}_{f_n}(t)-\mu^{r}_{g_n}(t)|dt=0.
	\end{equation}
	Since $\mu^{r}_{f_n}(t)$ and $\mu^{r}_{g_n}(t)$ are decreasing functions, by Chebyshev's inequality and (\ref{eq:pap39dab1}), (\ref{eq:pap39dab1a}), $\forall n>N_1$,
	\begin{equation}
	\delta \mu^r_{f_n}(\delta)\le \int_{0}^{\delta} \mu^{r}_{f_n}(t)dt<\epsilon/6, \qquad \delta \mu^r_{g_n}(\delta)\le \int_{0}^{\delta} \mu^{r}_{g_n}(t)dt<\epsilon/6,
	\end{equation}
	 and so for $\displaystyle C_{\delta} = \left(\frac{\epsilon}{6 \delta}\right)^{1/r}$,
	\begin{equation}
	\label{eq:pap38da3}
	\sup_{n\in \mathbb{N}}\mu_{f_n}(\delta)\le C_{\delta} \quad \text{and} \quad \sup_{n\in \mathbb{N}}\mu_{g_n}(\delta)\le C_{\delta}.
	\end{equation}
	Let 
	\begin{equation}
	\tilde{f_n}(x)=|f_n|\land M \qquad \text{and} \qquad \tilde{g_n}(x)=|g_n| \land M.
	\end{equation}
	
	Observe that for all $n\in \mathbb{N}$ and all $\delta \le  t < M$,
	\begin{equation}
	\label{eq:pap2311}
	\mu_{f_n}(t)=\mu_{\tilde{f}_n}(t) \quad \text{and} \quad \mu_{g_n}(t)=\mu_{\tilde{g}_n}(t).
	\end{equation}
Since both  $|\tilde{f}_n|\le M$, $|\tilde{g}_n|\le M$, and by (\ref{eq:pap38da3}) $\mu_{\tilde{f}_n}(\delta)\le C_{\delta}$, $\mu_{\tilde{g}_n}(\delta)\le C_{\delta}$ and the  conditions  (\ref{eq:fth2b}) hold for $\{\tilde{f}_n\}$ and $\{\tilde{g}_n\}$.
	By Lemma \ref{lemma: fth3} and (\ref{eq:pap2311}),
	\begin{equation}
	\label{eq:pap2312}
	\lim_{n\to \infty}\int_{\delta}^{M}|\mu^{r}_{f_n}(t)-\mu^{r}_{g_n}(t)|dt=\lim_{n\to \infty}\int_{\delta}^{M}|\mu^{r}_{\tilde{f}_n}(t)-\mu^{r}_{\tilde{g}_n}(t)|dt=0
	\end{equation}
	and so there exists $N > N_1 $ such that for all $n, m \ge N $,
	\begin{equation}
	\label{eq:pap2313}
	\int_{\delta}^{M}|\mu^{r}_{f_n}(t)-\mu^{r}_{g_n}(t)|dt<\epsilon/3,
	\end{equation}
	and so (\ref{eq:pap2313}), (\ref{eq:pap39dab1ab}) and (\ref{eq:pap39da1}) contradict (\ref{eq:pap37da}).

\end{proof}

\begin{defn}
	\label{defn: Lpq dot2}
	Let $(\Omega, \mathcal{F}, \mu)$ be a  charge space and $0<p, q<\infty$. We say  that  $\{f_n\} \in \dot{L}^{p,q}(\Omega, \mathcal{F}, \mu)$ if $\{f_n\}\in \dot{T}M(\Omega)$ is a  sequence of simple  integrable functions such that
	\begin{equation}
	\label{eq:pap17a1}
	\lim_{m, n \to \infty}\int_0^{\infty}|\mu^{q/p}_{f_n}(t)-\mu^{q/p}_{f_m}(t)|t^q \frac{dt}{t} =0.
	\end{equation}
	We define 
	\begin{equation}
	\label{eq:pap18a}
	\bm{\norm{\{f_n\}}_{\dot{L}^{p,q}(\Omega)}}=\lim_{n\to \infty}\left(p\int_0^{\infty} \mu^{q/p}_{f_n}(t)t^q \frac{dt}{t}\right)^{1/q}
	\end{equation}
\end{defn}

From Theorems \ref{thm:LpqEquiv} and \ref{thm:Lpq1}, it follows that if $ \{f_n\} $ and $\{g_n\}$ are equivalent in $\dot{T}M(\Omega)$ and each is in $\dot{L}^{p,q}$, then $\norm{\{f_n\}}_{\dot{L}^{p,q}(\Omega)} = \norm{\{g_n\}}_{\dot{L}^{p,q}(\Omega)}$.

\begin{remark}
Equivalence of representatives in $\dot{T}M(\Omega)$ does not necessarily imply equivalence in $\dot{L}^{p,q}(\Omega)$. For example, the sequence $ \{n^{2/p}I_{[0, n^{-1}]} \}$ is a representative of $\{0\}$ in $\dot{T}M(\Omega)$, but is not a representative of $\{0\}$ in $\dot{L}^{p,q}(\Omega)$ since $\norm{\{0\}}_{\dot{L}^{p,q}(\mathbb{R})}=0$, but $\lim_{n\to \infty}\norm{f_n}_{L^{p,q}(\mathbb{R})}=\infty$. 
\end{remark}

\begin{thm}
\label{thm:Lp infiniti}
	Let $(\Omega, \mathcal{F}, \mu)$ be a  charge space and $0<p<\infty$. Let $\{f_n\}, \{g_n\} \in \dot{T}M(\Omega)$ be sequences of simple  integrable functions such that $\forall \epsilon>0$, 
	\begin{equation}
	\label{eq:Lp infiniti2}
	\lim_{n \to \infty}\mu\left(\{|f_n-g_n|>\epsilon\}\right)=0.
	\end{equation}
	If
	\begin{equation}
	\label{eq:Lp infiniti3}
	\lim_{n,m\to \infty}\left(\sup_{t>0}\{ t^p |\mu_{f_n}(t)-\mu_{f_m}(t)| \}\right)=\lim_{n,m\to \infty}\left(\sup_{t>0}\{ t^p |\mu_{g_n}(t)-\mu_{g_m}(t)| \}\right)=0
	\end{equation}
then
	\begin{equation}
	\label{eq:Lp infiniti4}
	\lim_{n\to \infty} \sup_{t>0}\{ t^p \mu_{f_n}(t)\}=\lim_{n\to \infty} \sup_{t>0}\{ t^p \mu_{g_n}(t)\}
	\end{equation}
\end{thm}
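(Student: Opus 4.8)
My plan is to work throughout with the functional $\Phi(h):=\sup_{t>0}t^{p}\mu_{h}(t)$, which is finite for a simple integrable function $h$ (indeed $\mu_h(t)\le \mu(\mathrm{supp}\,h)<\infty$, so $t^p\mu_h(t)\le t^p\mu(\mathrm{supp}\,h)$ is bounded on the bounded interval where $\mu_h$ does not vanish). With this notation the two quantities in (\ref{eq:Lp infiniti4}) are $\lim_{n}\Phi(f_n)$ and $\lim_{n}\Phi(g_n)$. First I would record that both limits exist: applying the elementary bound $|\sup_t a(t)-\sup_t b(t)|\le \sup_t|a(t)-b(t)|$ with $a(t)=t^p\mu_{f_n}(t)$ and $b(t)=t^p\mu_{f_m}(t)$, hypothesis (\ref{eq:Lp infiniti3}) forces $\{\Phi(f_n)\}$ and $\{\Phi(g_n)\}$ to be Cauchy sequences in $\mathbb{R}$; denote their limits by $L_f$ and $L_g$. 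By the symmetry between $f$ and $g$ the theorem then reduces to the single inequality $L_f\le L_g$.

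To bound $\Phi(f_n)$ in terms of $\Phi(g_n)$, I would fix a tolerance $\varepsilon_2>0$ and, using (\ref{eq:Lp infiniti3}), choose $N_1$ so that $\sup_t t^p|\mu_{f_n}(t)-\mu_{f_{N_1}}(t)|<\varepsilon_2$ and $\sup_t t^p|\mu_{g_n}(t)-\mu_{g_{N_1}}(t)|<\varepsilon_2$ for all $n\ge N_1$. Set $M=\max\{\max(|f_{N_1}|),\max(|g_{N_1}|)\}$ and $S=\max\{\mu(\mathrm{supp}(f_{N_1})),\mu(\mathrm{supp}(g_{N_1}))\}$, both finite since $f_{N_1},g_{N_1}$ are simple integrable; then pick $\delta>0$ with $\delta^{p}S<\varepsilon_2$ and finally $\varepsilon_1\in(0,\delta)$. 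The supremum defining $\Phi(f_n)$ is now split over the three ranges $t\ge M$, $t\le\delta$, and $\delta\le t\le M$, as in the proof of Theorem \ref{thm:Lpq1}.

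On the tail $t\ge M$ one has $\mu_{f_{N_1}}(t)=0$, so $t^p\mu_{f_n}(t)=t^p|\mu_{f_n}(t)-\mu_{f_{N_1}}(t)|\le\varepsilon_2$; near zero, $t\le\delta$ gives $t^p\mu_{f_n}(t)\le \delta^{p}\mu_{f_{N_1}}(t)+t^p|\mu_{f_n}(t)-\mu_{f_{N_1}}(t)|\le \delta^{p}S+\varepsilon_2<2\varepsilon_2$. The heart of the argument is the middle range, where the charge-convergence hypothesis enters through the inclusion $\{|f_n|>t\}\subseteq\{|g_n|>t-\varepsilon_1\}\cup\{|f_n-g_n|>\varepsilon_1\}$; finite subadditivity of $\mu$ (all sets being in $\mathcal{F}$, as the functions are simple) then gives $\mu_{f_n}(t)\le \mu_{g_n}(t-\varepsilon_1)+\eta_n$ with $\eta_n:=\mu(\{|f_n-g_n|>\varepsilon_1\})$. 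Since $t-\varepsilon_1\ge \delta-\varepsilon_1>0$ and $t\mapsto t/(t-\varepsilon_1)$ is decreasing, for $\delta\le t\le M$ I obtain
\[
t^{p}\mu_{f_n}(t)\le \Big(\tfrac{t}{t-\varepsilon_1}\Big)^{p}(t-\varepsilon_1)^{p}\mu_{g_n}(t-\varepsilon_1)+t^p\eta_n\le \Big(\tfrac{\delta}{\delta-\varepsilon_1}\Big)^{p}\Phi(g_n)+M^{p}\eta_n .
\]

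Combining the three ranges gives $\Phi(f_n)\le \max\big(2\varepsilon_2,\ (\delta/(\delta-\varepsilon_1))^{p}\Phi(g_n)+M^{p}\eta_n\big)$ for every $n\ge N_1$. Taking $\limsup_{n\to\infty}$ and using (\ref{eq:Lp infiniti2}), so that $\eta_n\to0$ for the fixed $\varepsilon_1$ while $\Phi(g_n)\to L_g$, I get $L_f\le \max\big(2\varepsilon_2,(\delta/(\delta-\varepsilon_1))^{p}L_g\big)$; letting $\varepsilon_1\to0$ and then $\varepsilon_2\to0$ yields $L_f\le L_g$, and the symmetric computation gives $L_g\le L_f$. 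I expect the middle range to be the main obstacle, and it is also where this proof must depart from Lemma \ref{lemma: fth3}: because $\mu_{f_n}$ and $\mu_{g_n}$ are step functions with jumps, $\sup_{[\delta,M]}|\mu_{f_n}-\mu_{g_n}|$ cannot be made small, so rather than estimating the difference of the distributions I compare the two suprema directly by means of the horizontal $\varepsilon_1$-shift supplied by charge convergence, paying only the multiplicative factor $(\delta/(\delta-\varepsilon_1))^{p}$, which is sent harmlessly to $1$.
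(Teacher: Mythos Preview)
Your proof is correct. The crucial device---the inclusion $\{|f_n|>t\}\subseteq\{|g_n|>t-\varepsilon_1\}\cup\{|f_n-g_n|>\varepsilon_1\}$ together with the Cauchy hypothesis to anchor to a fixed index $N_1$---is exactly the same one the paper uses, but the two arguments are organised differently. You split $(0,\infty)$ into the three ranges $t\ge M$, $t\le\delta$, $\delta\le t\le M$ in the style of Theorem~\ref{thm:Lpq1}, absorb the horizontal shift by the multiplicative factor $(\delta/(\delta-\varepsilon_1))^{p}$, and then exploit the $f\leftrightarrow g$ symmetry to reduce to a one-sided inequality. The paper instead passes to subsequences with $\mu_{f_n-g_n}(1/n)<1/n$, picks a single point $t_{N_1}$ within $\varepsilon/9$ of $\sup_t t^p\mu_{f_{N_1}}(t)$, and estimates the difference of the two suprema directly at that point. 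Your three-range decomposition is arguably more systematic and makes the parallel with Theorem~\ref{thm:Lpq1} transparent; the paper's near-maximiser approach avoids the auxiliary parameters $\delta$, $S$ and the multiplicative factor at the cost of a slightly more ad hoc chain of inequalities. Both routes are of comparable length and neither requires anything the other does not.
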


\begin{proof}
	Let $ \epsilon > 0 $. Taking subsequences, we may assume
	\begin{equation}\label{lpinf:eq:4}
	\mu_{f_n - g_n}(1/n) = \mu\left( \left\lbrace |f_n - g_n| > \frac{1}{n} \right\rbrace \right) < \frac{1}{n}, \qquad \forall n \in \mathbb{N}.
	\end{equation}	
	By $ (\ref{eq:Lp infiniti3}) $, there exists $ N_1 \in \mathbb{N} $ so that $ \forall n \geq N_1 $,
	\begin{equation}\label{lpinf:eq:5}
	\sup_{t>0} \{ t^p |\mu_{f_n}(t) - \mu_{f_{N_1}}(t)| \} < \epsilon/9.
	\end{equation}
	There also exists $ t_{N_1} > 0 $ so that
	\begin{equation}\label{lpinf:eq:6}
	\sup_{t>0} \{t^p \mu_{f_{N_1}}(t)\} \leq t^{p}_{N_1} \mu_{f_{N_1}}(t_{N_1}) + \epsilon/9.
	\end{equation}
	For any $ n \geq N_1 $,
	\begin{equation}\label{lpinf:eq:6b}
	\begin{aligned}
	t_{N_1}^p \mu_{f_n}(t_{N_1}) &\leq \sup_{t>0} \{ t^p \mu_{f_n}(t)\}
	\leq \sup_{t>0} \{ t^p |\mu_{f_n}(t) - \mu_{f_{N_1}}(t)| \} + \sup_{t>0} \{t^p \mu_{f_{N_1}}(t)\} \\
	&< 2\epsilon/9 + t_{N_1}^p\mu_{f_{N_1}}(t_{N_1})
	\end{aligned}
	\end{equation}
	which implies $ \mu_{f_n}(t_{N_1}) \leq \epsilon/t_{N_1}^p + \mu_{f_{N_1}}(t_{N_1}) $, and 
	\begin{equation}
	\begin{aligned}
	\sup_{t>0} \{ t^p \mu_{f_n}(t)\} 
	&\leq 2\epsilon/9 + t_{N_1}^p\mu_{f_{N_1}}(t_{N_1}) - t_{N_1}^p \mu_{f_n}(t_{N_1}) + t_{N_1}^p \mu_{f_n}(t_{N_1}) \\
	&\leq 2\epsilon/9 + t_{N_1}^p | \mu_{f_n}(t_{N_1}) - \mu_{f_{N_1}}(t_{N_1})| + t_{N_1}^p \mu_{f_n}(t_{N_1}) \\
	&\leq 2\epsilon/9 + \sup_{t>0} \{ t^p |\mu_{f_n}(t) - \mu_{f_{N_1}}(t)|  \} + t_{N_1}^p \mu_{f_n}(t_{N_1}) \\
	&\leq \epsilon/3 + t_{N_1}^p \mu_{f_n}(t_{N_1}).	
	\end{aligned}
	\end{equation}
	
	Similarly, there exists $ N_2 \in \mathbb{N} $ and $ t_{N_2} > 0 $ so that  $ \forall n \geq N_2 $,
	\begin{equation}
	\label{lpinf:eq:5a}
	\sup_{t>0}\{t^p \mu_{g_n}(t) \} \leq t_{N_2}^p \mu_{g_{n}}(t_{N_2}) + \epsilon/3 \qquad \text{ and } \qquad \mu_{g_n}(t_{N_2}) \leq \epsilon/t_{N_2}^p + \mu_{g_{N_2}}(t_{N_2}) .
	\end{equation}
	Set $ A = \max(\epsilon/t_{N_1}^p + \mu_{ f_{N_1}}(t_{N_1}),\epsilon/t_{N_2}^p + \mu_{g_{N_2}}(t_{N_2})) $. Let $ N > \max(N_1, N_2, 3t_{N_1}^p/\epsilon, 3t_{N_2}^p/\epsilon) $ and
	\begin{equation}\label{lpinf:eq:6d}
	\max\left( \left|t_{N_1}^p - (t_{N_1}-1/N)^p\right|, \left|t_{N_2}^p - (t_{N_2}-1/N)^p\right| \right) < \frac{\epsilon}{3+ A}
	\end{equation}
	Let $ n > N $ and without loss of generality, assume $ \sup_{t>0}\{t^p \mu_{f_n}(t)\} \geq \sup_{t>0} \{t^p\mu_{g_n}(t)\} $.
	Noting that for $ 0 < \delta < t $,
	\begin{equation}
	\mu_{f_n}(t) \leq \mu_{f_n - g_n}(\delta) + \mu_{g_n}(t-\delta),
	\end{equation}
	it follows
	\begin{equation}
	\begin{aligned}\label{lpinf:eq:6e}
	\left|\sup_{t>0} \{t^p \mu_{f_n}(t) \} -  \sup_{t>0} \{t^p \mu_{g_n}(t) \} \right| 
	&\leq t_{N_1}^p \mu_{f_{n}}(t_{N_1}) + \epsilon/3 - \left( t_{N_1} - 1/N \right)^p \mu_{g_n}(t_{N_1}-1/N) \\
	&\leq t_{N_1}^p \mu_{f_{n}}(t_{N_1}) + \epsilon/3 - \left( t_{N_1} - 1/N \right)^p \left[ \mu_{f_n}(t_{N_1}) - \mu_{f_n - g_n}(1/N) \right] \\
	&\leq \mu_{f_n}(t_{N_1}) \left|t_{N_1}^p - (t_{N_1}-1/N)^p\right| + \epsilon/3 + \epsilon/3 < \epsilon.
	\end{aligned}
	\end{equation}
	Having obtained the conclusion for subsequences of $ \{f_n\} $ and $ \{g_n\} $, the conclusion follows as the limits in $ (\ref{eq:Lp infiniti4}) $ exist. 
\end{proof}

\begin{defn}
		Let $(\Omega, \mathcal{F}, \mu)$ be a  charge space and $0<p<\infty$. We say  that  $\{f_n\} \in \dot{T}M(\Omega)$ belongs to the space $\bm{\dot{L}^{p,\infty}(\Omega, \mathcal{F}, \mu)}$ if each $f_n$ is integrable and 
	\begin{equation}
	\label{eq:pap17ab}
	\lim_{m,n \to \infty}\left(\sup_{t>0}\{ t |\mu_{f_n}^{\frac{1}{p}}(t)-\mu_{f_m}^{\frac{1}{p}}(t)| \}\right)=0.
	\end{equation}
	We  define 
	\begin{equation}
	\label{eq:pap18ab}
	\bm{\norm{\{f_n\}}_{\dot{L}^{p,\infty}(\Omega)}}=\lim_{n\to \infty}\norm{f_n}_{L^{p, \infty}(\Omega)}=\lim_{n\to \infty} \sup_{t>0}\{ t \mu_{f_n}^{\frac{1}{p}}(t)\}.
	\end{equation}
\end{defn}

\section{Embedding of a charge space into a measure space}\label{pp:sec:embedfeff}

\subsection{A Theorem of C. Fefferman  }

In \cite{Fefferman} C. Fefferman embedded  charge spaces in  measure spaces. His result is summarized in parts (1) - (3) of Theorem \ref{thm:cfs1}. The application to $L^{p, q}$ spaces necessitates a modest extension and this is part (4) of Theorem \ref{thm:cfs1}.

The notation we use in this chapter is consistent with the notation of the previous two chapters but is different from the notation used in \cite{Fefferman}.

\begin{theorem}
	\label{thm:cfs1}
	Let $(\Omega, \mathcal{F}, \mu)$ be a charge space. There is a  measure space $(\Omega', \Sigma', \mu')$ and an order-preserving, multiplication-preserving isometric isomorphism $\phi$ from $ \dot{T}M(\Omega, \mathcal{F}, \mu)$ onto $TM(\Omega', \Sigma', \mu')$, which we denote by $\mathcal{M}(\Omega', \Sigma', \mu')$,	such that the following holds:
	\begin{enumerate}
		\item If $\{f_n\}\in \dot{T}M(\Omega, \mathcal{F}, \mu)$ is an indicator function  , then $\phi(\{f_n\})\in \mathcal{M}(\Omega', \Sigma', \mu')$ is an indicator function. If $\{f_n\}\in \dot{T}M(\Omega, \mathcal{F}, \mu)$ is a simple function, then $\phi(\{f_n\})\in \mathcal{M}(\Omega', \Sigma', \mu')$ is a simple function.
		
		\item If $1\le p\le \infty$, then $\phi$ takes $\dot{L}^p(\Omega, \mathcal{F}, \mu)$ onto $L^p(\Omega', \Sigma', \mu')$ preserving the $L^p$ norm.
		
		\item If $\{f_n\}\in \dot{L}^{1} (\Omega, \mathcal{F}, \mu)$, then $\int_{\Omega}\{f_n\} d\mu= \int_{\Omega'}\phi(\{f_n\}) d\mu'$.
		
		\item If   $0< p, q < \infty$,   then $\phi$ takes $\dot{L}^{p,q}(\Omega, \mathcal{F}, \mu)$ onto $L^{p,q}(\Omega', \Sigma', \mu')$ preserving the $L^{p,q}$ norm. It also maps $\dot{L}^{p,\infty}(\Omega, \mathcal{F}, \mu)$ onto $\dot{L}^{p,\infty}(\Omega', \Sigma', \mu')$ preserving the $L^{p,\infty}$ norm.

	\end{enumerate}
\end{theorem}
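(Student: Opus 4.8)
The plan is to regard parts (1)--(3) as supplied verbatim by Fefferman's construction in \cite{Fefferman}, so that I have at my disposal an order-preserving, multiplication-preserving linear isometry $\phi\colon \dot{T}M(\Omega,\mathcal{F},\mu)\to TM(\Omega',\Sigma',\mu')$ that sends indicators to indicators, simple functions to simple functions, and preserves both the $L^p$ norms ($1\le p\le\infty$) and the integral. Part (4) should then follow from a single structural fact: since every Lorentz quantity defined in \eqref{eq:pap2308a} (and its $L^{p,\infty}$ counterpart) is a functional of the distribution $\mu_f$ alone, it suffices to show that $\phi$ preserves distribution functions of simple functions. So the first step I would carry out is: for a simple integrable $f=\sum_j \beta_j I_{\{f=\beta_j\}}$ in canonical form, establish
\[
\mu'_{\phi(f)}(t)=\mu_f(t)\qquad (t>0).
\]
This I would get from the three cited properties: by part (1) each $\phi(I_{\{f=\beta_j\}})$ is an indicator $I_{E'_j}$; multiplication-preservation turns the relations $I_{\{f=\beta_j\}}I_{\{f=\beta_k\}}=0$ into $I_{E'_j}I_{E'_k}=0$, so the $E'_j$ are disjoint; and part (3) applied to each indicator gives $\mu'(E'_j)=\int I_{E'_j}\,d\mu'=\int I_{\{f=\beta_j\}}\,d\mu=\mu(\{f=\beta_j\})$. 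Linearity then yields $\phi(f)=\sum_j\beta_j I_{E'_j}$ with identical coefficients over disjoint sets of equal measure, whence the displayed identity and, by inspection of the definitions, $\norm{\phi(f)}_{L^{p,q}(\Omega')}=\norm{f}_{L^{p,q}(\Omega)}$ and $\norm{\phi(f)}_{L^{p,\infty}(\Omega')}=\norm{f}_{L^{p,\infty}(\Omega)}$ for all simple integrable $f$.

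Next I would promote this to sequences. Let $\{f_n\}\in\dot{L}^{p,q}(\Omega)$ and put $f'_n=\phi(f_n)$, simple functions on the genuine measure space $\Omega'$. Because distributions are preserved, the Cauchy condition \eqref{eq:pap17a1} for $\{f_n\}$ is word-for-word the $L^{p,q}$-Cauchy condition for $\{f'_n\}$; as $L^{p,q}(\Omega',\Sigma',\mu')$ is complete, $f'_n\to g'$ there for some $g'\in L^{p,q}(\Omega')$ with $\norm{g'}_{L^{p,q}}=\lim_n\norm{f'_n}_{L^{p,q}}=\lim_n\norm{f_n}_{L^{p,q}}=\norm{\{f_n\}}_{\dot{L}^{p,q}(\Omega)}$. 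On the other hand, because $\{f_n\}\in\dot{T}M(\Omega)$, the element $\phi(\{f_n\})$ is by construction the function $f'\in\mathcal{M}(\Omega')$ obtained as the limit of $\{f'_n\}$ in the $\norm{\cdot}_{\mathcal{F}(\Omega')}$-metric, i.e. in outer measure. To conclude $\phi(\{f_n\})\in L^{p,q}(\Omega')$ with the asserted norm I must identify $f'$ with $g'$, which I would do by noting that $L^{p,q}$-convergence forces $\mu'_{f'_n-g'}(t)\to 0$ for each fixed $t$ (via $t\,\mu'_h(t)^{1/p}\le\norm{h}_{L^{p,\infty}}\le C_{p,q}\norm{h}_{L^{p,q}}$), so $f'_n\to g'$ in measure as well; uniqueness of limits in measure gives $f'=g'$.

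For surjectivity I would start from an arbitrary $f'\in L^{p,q}(\Omega')$, approximate it by simple functions $s'_n\to f'$ \emph{simultaneously} in $L^{p,q}(\Omega')$ and in measure (both available on a measure space for $q<\infty$), pull them back through the isomorphism to simple functions $s_n=\phi^{-1}(s'_n)$ on $\Omega$, and check that preservation of distributions converts the $L^{p,q}$-Cauchyness of $\{s'_n\}$ into \eqref{eq:pap17a1} for $\{s_n\}$ while the $\mathcal{F}$-isometry makes $\{s_n\}$ Cauchy in $\mu^*$; then $\{s_n\}\in\dot{L}^{p,q}(\Omega)$ and $\phi(\{s_n\})=f'$. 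The $L^{p,\infty}$ assertion runs in parallel, with \eqref{eq:pap17ab} in place of \eqref{eq:pap17a1} and Theorem \ref{thm:Lp infiniti} supplying the well-definedness of the limiting supremum; here the target is the closure-of-simple-functions space $\dot{L}^{p,\infty}(\Omega')$ rather than $L^{p,\infty}(\Omega')$ precisely because simple functions are not dense in $L^{p,\infty}$.

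The step I expect to be the main obstacle is exactly the identification $f'=g'$: reconciling the two \emph{a priori} different limits that the sequence $\{f'_n\}$ carries, namely the $\norm{\cdot}_{\mathcal{F}(\Omega')}$-limit that defines $\phi(\{f_n\})$ and the $L^{p,q}(\Omega')$-limit produced by completeness. The resolution is to show that on the measure space both are the same mode of convergence, convergence in measure; the truncation $\min(1,\cdot)$ built into $\norm{\cdot}_{\mathcal{F}(\Omega')}$ and the possibility that $\mu'(\Omega')=\infty$ demand some care, but the weak-type control of the tail of the distribution makes the comparison routine.
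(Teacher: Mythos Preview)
Your approach and the paper's are the same at heart: both reduce part (4) to the fact that $\phi$ preserves distribution functions of simple integrable functions, and then extend by density. The paper runs the argument through $\phi^{-1}$ rather than $\phi$: it starts on the genuine measure space $L^{p,q}(\Omega',\Sigma',\mu')$, where simple functions are dense in the standard sense, pulls back norm-preservingly to $\dot{L}^{p,q}(\Omega)$, and checks surjectivity by noting that every $I_E$ with $\mu(E)<\infty$ lies in the image. Working in that direction sidesteps the limit-identification issue you correctly flag as the main obstacle, at the cost of invoking the identification $\dot{L}^{p,q}(\Omega',\mathcal{F}',\mu_1')=L^{p,q}(\Omega',\Sigma',\mu')$ without comment.

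One imprecision in your forward-direction argument: condition \eqref{eq:pap17a1}, which you correctly transport to $\{f'_n\}$ via distribution preservation, is \emph{not} Cauchyness in the $L^{p,q}$ norm --- it is a Cauchy condition on the functionals $t\mapsto\mu^{q/p}_{f'_n}(t)$, not on $\norm{f'_n-f'_m}_{L^{p,q}}$. So invoking completeness of $L^{p,q}(\Omega')$ to produce your $g'$ is not literally valid. What you actually need is the statement that if $f'_n\to f'$ in measure and \eqref{eq:pap17a1} holds, then $f'\in L^{p,q}(\Omega')$ with $\norm{f'}_{L^{p,q}}=\lim_n\norm{f'_n}_{L^{p,q}}$; this follows because convergence in measure forces $\mu'_{f'_n}(t)\to\mu'_{f'}(t)$ at every continuity point of $\mu'_{f'}$, hence Lebesgue-a.e., after which \eqref{eq:pap17a1} and Fatou give the conclusion directly. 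Once you have that, your separate limit $g'$ and the whole identification $f'=g'$ become unnecessary. The paper itself asserts the equivalent identification $\dot{L}^{p,q}(\Omega')=L^{p,q}(\Omega')$ without proof, so the underlying work is the same; but your explicit appeal to ``completeness'' papers over exactly the step that carries the weight.
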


We include the following background material from \cite{Fefferman} for the convenience of the reader.

	\begin{defn}
		Let $\bm{\mathcal{B}_0}$  be the subset of $\dot{T}M(\Omega, \mathcal{F}, \mu)$ of all indicator functions of sets in the field $\mathcal{F}$. Let  $\bm{\mathcal{B}}$ be the closure of $\mathcal{B}_0$ in $\dot{T}M(\Omega, \mathcal{F}, \mu)$ with respect to the metric defined in (\ref{eq:TMdot}). 

	\end{defn}

	\begin{defn}
		We define the union function $\bm{\bigcup_0}: \mathcal{B}_0\times \mathcal{B}_0 \to \mathcal{B}_0 \subseteq \mathcal{B}$, by
		\begin{equation}
		\bm{\bigcup_0(I_E, I_F)}=I_{E\cup F}
		\end{equation}
	\end{defn}		

	  Since $\bigcup_0$ is uniformly continuous on $\mathcal{B}_0\times \mathcal{B}_0$, we denote the  uniformly continuous extension of $\bigcup_0$  to a function $\bm{\bigcup} : \mathcal{B}\times \mathcal{B} \to \mathcal{B}$.

	\begin{defn}
		The function $\bm{N_0}: \mathcal{B}_0\to \mathcal{B}_0 \subseteq \mathcal{B}$ is defined by 
		\begin{equation}
		\bm{N_0(I_E)}=I_{\Omega\setminus E}.
		\end{equation}
		Since $|I_E-I_F|=|I_{(\Omega \setminus E)}-I_{(\Omega \setminus F)}|$, $N_0$ is an isometry and so extends to a uniformly continuous function $N : \mathcal{B}\to \mathcal{B}$. 
		Consistent with the notation of Boolean algebras, we denote $N(F)$ by $\sim F$. 
	\end{defn}

	\begin{defn}
		Define the intersection on $\mathcal{B} \times \mathcal{B}$ by
		\begin{equation}
		\{ I_{F_n} \} \cap \{ I_{G_n} \} =\{ I_{F_n \cap G_n} \} \qquad \{ I_{F_n} \}, \{ I_{G_n} \}\in \mathcal{B}.
		\end{equation}
	\end{defn}

	Since $\cap$ is a composition of  the complementations and unions which are uniformly continuous functions,  it is also uniformly continuous.
	
	We will now define a charge on $\mathcal{B}$. If $\{I_{E_n}\}$ and $\{I_{F_n}\}$   are Cauchy in charge and $\{I_{E_n}\} = \{I_{F_n}\}$, that is,
	\begin{equation}
	    \lim_{n\to \infty}\norm{I_{E_n}-I_{F_n}}_{\mathcal{F}(\Omega)}=0,
	\end{equation}
	it follows that
	\begin{equation}
	    \lim_{n\to \infty}\mu(E_n) = \lim_{n\to \infty}\mu(F_n)
	\end{equation}    
which justifies:
	\begin{defn}
	For $E=\{I_{E_n}\} \in \mathcal{B}$, let 
	\begin{equation}
	\bm{\mu_1(E)}=\lim_{n\to \infty}\mu(E_n).
	\end{equation} 
	
	\end{defn}

	\begin{lemma}\label{fefferman:lemma1}
		\footnote{Lemma 1 in \cite{Fefferman}. } $(\mathcal{B}, \cap, \cup, \sim)$ is a Boolean algebra and $\mu_1$ is positive and finitely additive on $\mathcal{B}$. If $F\in \mathcal{B}$ and $\mu_1(F)=0$, then $F$ is the null element of the Boolean algebra. 
	\end{lemma}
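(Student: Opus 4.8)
The plan is to reduce every Boolean-algebra identity to a set-theoretic identity on the dense subfamily $\mathcal{B}_0$ and then transfer it to all of $\mathcal{B}$ by continuity, and to compute $\mu_1$ and the $\mathcal{F}$-norm directly on representatives of the form $\{I_{E_n}\}$. First I would record that every $F\in\mathcal{B}$ admits a representative $\{I_{E_n}\}$ with $E_n\in\mathcal{F}$ and $\{I_{E_n}\}$ Cauchy in $\mu^*$: this is immediate from $\mathcal{B}$ being the closure of $\mathcal{B}_0$. This lets me write $\{I_{E_n}\}\cup\{I_{F_n}\}=\{I_{E_n\cup F_n}\}$, $\sim\{I_{E_n}\}=\{I_{\Omega\setminus E_n}\}$, and $\{I_{E_n}\}\cap\{I_{F_n}\}=\{I_{E_n\cap F_n}\}$; the last is the definition, and the first two follow because the uniformly continuous extensions agree with these formulas on $\mathcal{B}_0$ and the formulas are themselves continuous.

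For the Boolean-algebra axioms (commutativity, associativity, absorption, distributivity, the complement laws $x\cup\sim x=I_\Omega$ and $x\cap\sim x=\{0\}$, and the neutrality of the bottom element $\{0\}=I_\emptyset$ and top element $I_\Omega$), each is an equation between two expressions built from $\cup,\cap,\sim$. Both sides are compositions of the uniformly continuous operations $\cup,\cap,\sim$, hence continuous maps $\mathcal{B}^k\to\mathcal{B}$. On $\mathcal{B}_0^k$ the two sides coincide, since there they reduce to the corresponding set identity in the field $\mathcal{F}$ (for instance $E\cap(F\cup G)=(E\cap F)\cup(E\cap G)$). Because $\mathcal{B}_0^k$ is dense in $\mathcal{B}^k$ in the product metric and $\dot{T}M(\Omega)$ is a metric space, two continuous maps agreeing on a dense set agree everywhere, so each axiom holds on all of $\mathcal{B}$. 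This settles the Boolean-algebra claim.

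For $\mu_1$, positivity is immediate since $\mu_1(F)=\lim_n\mu(E_n)\ge 0$. For finite additivity I would start from the inclusion–exclusion identity $\mu(E_n\cup F_n)+\mu(E_n\cap F_n)=\mu(E_n)+\mu(F_n)$, valid for $E_n,F_n\in\mathcal{F}$ by finite additivity of $\mu$, and pass to the limit using the representations above together with well-definedness of $\mu_1$ (established just before the lemma), obtaining the modular law $\mu_1(F\cup G)+\mu_1(F\cap G)=\mu_1(F)+\mu_1(G)$. When $F\cap G=\{0\}$ (the sets disjoint in $\mathcal{B}$) the term $\mu_1(F\cap G)$ vanishes and disjoint additivity follows. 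Finally, for the last assertion I would compute, for $F=\{I_{F_n}\}$, that $\norm{I_{F_n}}_{\mathcal{F}(\Omega)}=\min(1,\mu(F_n))$: indeed $\{|I_{F_n}|>a\}=F_n$ for $0<a<1$ and is empty for $a\ge 1$, so $\inf_{a>0}\max[a,\mu^*(\{|I_{F_n}|>a\})]=\min(1,\mu(F_n))$. Hence $\mu_1(F)=\lim_n\mu(F_n)=0$ forces $\lim_n\norm{I_{F_n}}_{\mathcal{F}(\Omega)}=0$, which is exactly $F=\{0\}=I_\emptyset$, the null element.

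The main obstacle is bookkeeping rather than depth: one must ensure the "extend by continuity" step is legitimate for multi-variable identities, which requires the joint continuity of $\cup$ and $\cap$ on $\mathcal{B}\times\mathcal{B}$ (guaranteed by their construction as uniformly continuous extensions) and the density of $\mathcal{B}_0^k$ in $\mathcal{B}^k$. A secondary point to watch is the possibility $\mu_1(F)=+\infty$: the modular identity must be read additively to avoid $\infty-\infty$, while the disjoint-additivity statement $\mu_1(F\cup G)=\mu_1(F)+\mu_1(G)$ holds in $[0,\infty]$ without subtraction.
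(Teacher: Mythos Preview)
The paper does not prove this lemma; it merely states it with a footnote attributing the result to Fefferman's original paper. Your argument is correct and is the natural one: the Boolean identities hold on the dense set $\mathcal{B}_0$ by elementary set theory and transfer to $\mathcal{B}$ by joint continuity of the extended operations, the modular law $\mu(E_n\cup F_n)+\mu(E_n\cap F_n)=\mu(E_n)+\mu(F_n)$ passes to the limit to give finite additivity of $\mu_1$, and the computation $\|I_{F_n}\|_{\mathcal{F}(\Omega)}=\min(1,\mu(F_n))$ immediately yields the last assertion. Your caveats about joint continuity on $\mathcal{B}^k$ and about reading the modular identity additively in $[0,\infty]$ are exactly the right points to watch, and both are handled by the paper's setup.
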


	By the Stone Representation Theorem (\cite{Dunford}), there is a compact extremally disconnected space, $\Omega'$, and a field $\mathcal{F}'$ of subsets of $\Omega'$ so that $\mathcal{F}'$ is isomorphic as a Boolean algebra with $\mathcal{B}$. The set of subsets which are both closed and open is closed under finite unions intersections and complements and therefore forms a field. Let $\bm{\phi_1}: \mathcal{B}\to \mathcal{F}'$ denote the isomorphism. Then $\phi_1$ induces a positive, finitely additive set function $\mu_1'$ on $\mathcal{F}'$ defined in the obvious way using $\phi_1$ and $\mu_1$. 
	The isomorphism proves that the conclusion of Lemma $ \ref{fefferman:lemma1} $ holds on $(\Omega', \mathcal{F}', \mu'_1)$.
	The field $\mathcal{F}'$ need not be a sigma-field, however,
	
	\begin{lemma}
		$\mu_1'$ is countably additive on $\mathcal{F}'$.
	\end{lemma}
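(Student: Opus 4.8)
The plan is to exploit two facts about the Stone space $\Omega'$: it is compact, and the members of $\mathcal{F}'$ are precisely its clopen subsets, so each is a closed — hence compact — subset of $\Omega'$. Countable additivity on a field means that whenever a sequence of pairwise disjoint sets of $\mathcal{F}'$ has its union again in $\mathcal{F}'$, the measure of the union equals the sum of the measures of the pieces. My strategy is to show that such a union can never be genuinely infinite: all but finitely many terms must be empty, so the statement collapses to the finite additivity already recorded in Lemma \ref{fefferman:lemma1} and transported to $\mathcal{F}'$ through the Boolean isomorphism $\phi_1$.

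First I would take pairwise disjoint $\{A_n\}_{n\in\mathbb{N}} \subseteq \mathcal{F}'$ with $A := \bigcup_{n} A_n \in \mathcal{F}'$. Since $A$ is closed in the compact space $\Omega'$, it is compact, and since each $A_n$ is open, the family $\{A_n\}$ is an open cover of $A$. I would then extract a finite subcover $A_{n_1}, \dots, A_{n_k}$. The disjointness hypothesis does the real work here: for any index $m$ outside $\{n_1,\dots,n_k\}$ the set $A_m \subseteq A$ meets none of $A_{n_1}, \dots, A_{n_k}$, forcing $A_m = \emptyset$. Hence $A = \bigcup_{i=1}^{k} A_{n_i}$ is in fact a finite union, and finite additivity of $\mu_1'$ yields
\begin{equation}
\mu_1'(A) = \sum_{i=1}^{k} \mu_1'(A_{n_i}) = \sum_{n=1}^{\infty} \mu_1'(A_n),
\end{equation}
since the discarded tail terms each contribute $\mu_1'(\emptyset)=0$.

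I do not expect a serious obstacle in this lemma: the compactness of the Stone space makes countable additivity on the clopen field essentially automatic. The only points meriting care are that finite additivity must first be carried from $\mathcal{B}$ to $\mathcal{F}'$ along $\phi_1$ (already asserted in the text, by definition of $\mu_1'$), and that the compactness reduction genuinely requires the \emph{union} to lie in $\mathcal{F}'$, rather than merely in the $\sigma$-field it generates — which is exactly the hypothesis built into the definition of countable additivity on a field. The subtler matters, such as extending $\mu_1'$ to the generated $\sigma$-field to obtain a bona fide measure space, belong to the subsequent steps of the construction and not to this statement.
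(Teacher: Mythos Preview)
Your proposal is correct and matches the paper's own proof almost verbatim: both use that $A\in\mathcal{F}'$ is closed in the compact Stone space, hence compact, so the open cover by the disjoint clopen $A_n$ admits a finite subcover, forcing all but finitely many $A_n$ to be empty and reducing the claim to finite additivity. There is nothing to add.
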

    \begin{proof}
	Let $ A_1, A_2,\ldots
	 \in \mathcal{F}' $ be disjoint and let $ A = \bigcup_{i=1}^{\infty} A_i $. If $A \in \mathcal{F}'$ then $A$ is closed, hence compact. Since each $ A_i \in \mathcal{F}' $ is open, $ \{A_i\}_{i=1}^{\infty} $ is an open cover of $ A $, and it follows that only finitely many of the $ A_i \neq \emptyset $, proving that $ \mu_1' $ is (trivially) countably additive on $ \mathcal{F}' $.
    \end{proof}
	
	Since $\mu_1'$ is countably additive on $\mathcal{F}'$, by Carath\'{e}odory's Theorem there exists an extension of $\mu_1'$ to a positive measure $\mu'$ on $\Sigma'$, the sigma field generated by $\mathcal{F}'$.
	We shall show that $(\Omega', \Sigma', \mu')$ is the measure space of Theorem \ref{thm:cfs1}.

    We extend $\phi_1: \mathcal{B}\to \mathcal{F}'$ to simple functions, $\bm{\phi_2}: \mathcal{S}(\mathcal{B}, \mu_1)\to TM(\Omega', \mathcal{F}', \mu')$: 
    \begin{equation*}
	\phi_2\left(\sum_{i=1}^{n}\alpha_iE_i\right)=\sum_{i=1}^{n}\alpha_i\phi_1(I_{E_i}).
	\end{equation*}
	
	Since $\mathcal{S}(\Omega, \mathcal{F}, \mu)$ is dense in $\dot{T}M(\Omega)$ we can extend  $\phi_2$  continuously to an isomorphism 
	\begin{equation}
	\bm{\phi} : \dot{T}M(\Omega, \mathcal{F}, \mu) \to \dot{T}M(\Omega', \Sigma', \mu')
	\end{equation}
	such that for all $\{f_n\}\in \dot{T}M(\Omega)$,
	\begin{equation}
	\phi\left(\{f_n\right\})=\lim_{n\to \infty}\phi_2(f_n),
	\end{equation}
where the limit on the right hand side is the limit in the measure $\mu'$. It is clear that $\phi\left(\{f_n\right\})\in \dot{T}M(\Omega', \Sigma', \mu')$. By the construction,  $(\Omega', \Sigma', \mu')$ is a measure space and since in measure spaces, sequences which are Cauchy in measure converge in measure, we can identify the sequence $\{f_n\} \in \dot{T}M(\Omega', \Sigma', \mu') $ with the limit in measure of $  \{f_n\} $. In other words, $\dot{T}M(\Omega', \Sigma', \mu')= \mathcal{M}(\Omega', \Sigma', \mu')$, the space of measurable functions on $(\Omega', \Sigma', \mu') $.

	From the definition of $\phi$ it is immediate that $\phi$ is an isometric isomorphism that preserves the order as defined in Definition \ref{charge:def:order}, mapping $E\in \mathcal{B}$ to indicator functions $I_{\phi(E)}\in \mathcal{M}(\Omega', \Sigma', \mu')$. 

	The proofs of parts 2 and 3 in Theorem \ref{thm:cfs1} may also be found in the original work, \cite{Fefferman}. Closely following the proof of part 2, we proceed to prove part 4: 
		
		If $G\in \mathcal{B}$  then $G\in \dot{L}^{p,q}(\Omega, \mathcal{F}, \mu)$ and $\norm{G}^{q}_{\dot{L}^{p,q}}=\left(\frac{p}{q}\mu_1(G)\right)^{1/p}$. In this case $\phi^{-1}$ maps $\mu_1'$-integrable simple functions in ${L}^{p,q}(\Omega', \Sigma', \mu')$ to elements in $\dot{L}^{p,q}(\Omega, \mathcal{F}, \mu)$ and preserves the $L^{p,q}$ norm. Therefore $\phi^{-1}$ takes $L^{p,q}(\Omega', \Sigma', \mu')$ into $\dot{L}^{p,q}(\Omega, \mathcal{F}, \mu)$ preserving norms. But $\dot{L}^{p,q}(\Omega ', \mathcal{F}', \mu_1')=L^{p,q}(\Omega', \Sigma', \mu')$, so $\phi^{-1}$ takes $L^{p,q}(\Omega', \Sigma', \mu')$ isometrically into  $\dot{L}^{p,q}(\Omega, \mathcal{F}, \mu)$.
		
		If $E\in \mathcal{F}$ it is clear  that $I_{\phi(E)}\in L^{p,q}(\Omega', \Sigma', \mu')$. Since $I_E=\phi^{-1}(I_{\phi(E)})$, we have $I_E\in image(\phi^{-1})$. On the other hand, the linear span of $\{I_E: \mu(E) < \infty \}$ is dense in $\dot{L}^{p,q}(\Omega, \mathcal{F}, \mu)$. So $\phi^{-1}$ is onto. This completes the proof of the theorem.

\begin{remark}
	\label{rmk:msch}
	By Theorem \ref{thm:cfs1}, any property of functions on a measure space that can be formulated in terms of operations preserved by the isomorphism $ \phi $, $ \Vert \cdot \Vert_{\dot{L}^p} $, $ \Vert \cdot \Vert_{\dot{L}^{p,q}} $, and the integral of $ f $ is true for functions on a charge space. Note that one may prove many such statements without embedding a charge space into a measure space.
\end{remark}

\section{Almost Periodic Functions}\label{pp:sec:apf}

\subsection{Shift invariant finitely additive probability charge on $\mathbb{R}$ and its extension to a measure space.}
\label{sec:gamma charge}

The following construction is commonly used in Ergodic Theory, see e.g. \cite{Rao}, pp. 39-41.

Consider the set function $\gamma$ on Lebesgue measurable subsets of $\mathbb{R}$ for which the following limit exists:
\begin{equation}
\gamma(S)=\lim_{\tau\to \infty}\frac{\lambda\left(\left\{S \cap (-\tau, \tau)\right\}\right)}{2\tau}.
\end{equation}

In this section we extend $\gamma$ to a charge on $\mathcal{L}(\mathbb{R})$.

Let $L^{\infty}(\mathbb{R}, \mathcal{L}, \lambda)$ be the space of $\lambda$-essentially bounded Lebesgue measurable functions with the norm $\norm{f}_{L^{\infty}}=\text{$\lambda$-ess.}\sup_{t\in\mathbb{R}}|f(t)| $.
Let 
\begin{equation}
\bm{M}=\left\{f \in L^{\infty}(\mathbb{R}): \quad \left|\int_{a}^{b}f(s)\,ds\right|\le B(f)<\infty \text{ for all } -\infty< a \le b<\infty \right\}
\end{equation}

Then $M$ is a subspace of $L^{\infty}(\mathbb{R}, \mathcal{L}, \lambda)$. Let $\tau_h(f)(x)=f(x+h)$ for all $h\in \mathbb{R}$.

For $f\in  L^{\infty}(\mathbb{R}, \mathcal{L}, \lambda)$ let 
\begin{equation}
\bm{p(f)}=\text{$\lambda$-ess.}\sup_{t\in\mathbb{R}}f(t).
\end{equation}
Then $p$ is a sublinear functional on $L^{\infty}(\mathbb{R}, \mathcal{L}, \lambda)$ that is nonnegative on $M$.
Indeed if $p(f)=\text{$\lambda$-ess.}\sup_{t\in\mathbb{R}}f(t)=\delta<0$ then $\left|\int_{-t}^{t}f(s)\, ds\right| \ge |\delta \cdot 2t| \to\infty$ as $t\to \infty$ and therefore not bounded by any real number.

By the Hahn-Banach Theorem there exists a linear functional $T :  L^{\infty}(\mathbb{R}, \mathcal{L}, \lambda)\to \mathbb{R}$, an extension of the 0 functional on $M$,  bounded by the sublinear functional $p$. \\

$T$  satisfies the following properties.
\begin{enumerate}
	
	\item $\bm{T(f)\ge 0}$  $\forall f\ge 0$.\quad  
	
	Since for $f\ge0$,
	\begin{equation}
	-T(f)=T(-f)\le p(-f)=\text{$\lambda$-ess.}\sup_{t\in\mathbb{R}}(-f(t))\le 0.
	\end{equation}
	
	\item $\bm{T(1)=1}$. \quad 
	
	Since
	\begin{equation}
	T(1)\le p(1)=1 \qquad and \qquad -T(1)=T(-1)\le p(-1)=-1. 
	\end{equation}

	\item $T\bm{(\tau_h(f))=T(f)}$ for all $h\in \mathbb{R}$.\quad 
	
	Let $f\in  L^{\infty}(\mathbb{R}, \mathcal{L}, \lambda),\, -\infty < a<b<\infty$ and $h\ge 0$.
	If $h\le b-a$ then 
	\begin{equation*}
	\begin{aligned}
	\left|\int_a^b \left[f(s)-\tau_h(f)(s)\right]ds\right|&=\left|\int_a^b \left[f(s)-f(s+h)\right]ds\right|=\left|\int_a^b f(s) \, ds -\int_{a+h}^{b+h} f(s) \, ds\right| \\&\le \left|\int_{a}^{a+h} f(s) \, ds\right|+\left|\int_{b}^{b+h} f(s) \, ds \right| \le 2\cdot \text{$\lambda$-ess.}\sup_{t\in\mathbb{R}}f(t)\cdot h<\infty.
	\end{aligned}
	\end{equation*}
	If $h>b-a$ then
	
	\begin{equation*}
	\begin{aligned}
	\left|\int_a^b \left[f(s)-\tau_h(f)(s)\right]ds\right|&=\left|\int_a^b \left[f(s)-f(s+h)\right]ds\right|=\left|\int_a^b f(s) \, ds -\int_{a+h}^{b+h} f(s) \, ds\right| \\&\le \left|\int_{a}^{b} f(s) \, ds \right|+\left|\int_{a+h}^{b+h} f(s) \, ds \right|\le 2\cdot \text{$\lambda$-ess.}\sup_{t\in\mathbb{R}}f(t)\cdot h<\infty
	\end{aligned}
	\end{equation*}

	proving that for all $h \geq 0 $,  $\left(f-\tau_h(f)\right)\in M$. Applying the result to $\tau_{-h}(f)$ for $h \geq 0$, we have $ (\tau_{-h}(f) - f) \in M $ and so for all $h$, $(f - \tau_h(f)) \in M$. Since $T$ restricted to $M$ is zero,
	\begin{equation}
	T(f)-T\tau_h(f)=T\left( f-\tau_h(f)\right)=0.
	\end{equation}

	\item $ \bm{\lim_{|t|\to\infty}\left(\textbf{$\lambda$-ess.inf }f(t)\right)\le T(f)\le\lim_{|t| \to\infty} \left(\textbf{$\lambda$-ess.sup }f(t) \right)}$. 
	
	From	
	\begin{equation*}
	T(f)\le p(f)= \text{$\lambda$-ess.}\sup_{t\in\mathbb{R}}f(t)\quad \text{and} \quad -T(f)=T(-f)\le p(-f)=\text{$\lambda$-ess.}\sup_{t\in\mathbb{R}}(-f(t)) =-\text{$\lambda$-ess.}\inf_{t\in\mathbb{R}}f(t) 
	\end{equation*}
	it follows that
	\begin{equation}
	\label{eq:prop4}
	\text{$\lambda$-ess.}\inf_{t\in\mathbb{R}}f(t)\le T(f)\le \text{$\lambda$-ess.}\sup_{t\in\mathbb{R}}f(t)
	\end{equation}
	If  $T(f)>\lim_{|t|\to \infty}\lambda-\text{ess.}\sup_{t\in\mathbb{R}}f(t)$ then $T(f)$ is larger than the  essential supremum  of $f$ on the  interval $(-\infty, -t_0)\cup (t_0, \infty)$. But then $$T(f-fI_{[-t_0,t_0]})=T(f)-T(fI_{[-t_0,t_0]})=T(f)>\lambda-\text{ess.}\sup_{t\in\mathbb{R}}\left(f(t)I_{(-\infty, -t_0)\cup (t_0, \infty)}\right)$$
	which is a contradiction to (\ref{eq:prop4})

	\item  $\bm{|T(f)|\le \norm{f}_{L^{\infty}}}$. \\
	Since 
	\begin{equation*}
	T(f)\le p(f)= \text{$\lambda$-ess.}\sup_{t\in\mathbb{R}}f(t) \le \text{$\lambda$-ess.}\sup_{t\in\mathbb{R}}|f(t)| = \norm{f}_{L^{\infty}}
	\end{equation*}

	\item If $\lim_{|t|\to \infty}f(t)$ exists then $\bm{T(f)=\lim_{|t|\to \infty}f(t)}$. \\
	This follows  from property 4.
\end{enumerate}

Let $E$ be a Lebesgue measurable subset of $\mathbb{R}$ and
\begin{equation}
\label{eq:fsube}
\bm{\lambda_E(t)}=\frac{\lambda\left(E\cap[-t,t]\right)}{2t}.
\end{equation}
We define 
\begin{equation}
\label{eq:defgamma}
\bm{\gamma(E)}=T(\lambda_E),
\end{equation}
where $T$ is a Banach limit defined above.

To prove the additivity of $\gamma$ let $E, F \in \mathcal{L}(\mathbb{R})$ and $E\cap F=\emptyset$. Then
\begin{equation}
\lambda_{E\cup F}(t)=\frac{\lambda\left((E\cup F)\cap[-t,t]\right)}{2t}=\frac{\lambda\left(E\cap[-t,t]\right)}{2t}+\frac{\lambda\left(F\cap[-t,t]\right)}{2t}=\lambda_E(t)+\lambda_F(t)
\end{equation}
Since $T$ is linear 
\begin{equation}
\gamma(E\cup F)=T(\lambda_{E\cup F})=T(\lambda_E+\lambda_F)=\gamma(E)+\gamma(F)
\end{equation}

By Property 6 of $T$, when the following limit exists, we have
\begin{equation}
\label{eq:ex111}
\gamma(E)=\lim_{t\to \infty}\frac{\lambda\left(E\cap[-t,t]\right)}{2t}.
\end{equation}
Since 
\begin{equation}
\label{eq:ex112}
\gamma(\emptyset)=\lim_{t\to \infty}\frac{\lambda\left(\emptyset\cap[-t,t]\right)}{2t}=0
\end{equation}
and
\begin{equation}
\label{eq:ex113}
\gamma(\mathbb{R})=\lim_{t\to \infty}\frac{\lambda\left(\mathbb{R}\cap[-t,t]\right)}{2t}=1,
\end{equation}
$(\mathbb{R}, \mathcal{L}(\mathbb{R}), \gamma)$ is a probability charge space.

\begin{defn}
	We say that $E\subset \mathbb{R}$ is a $q$-periodic set if $I_E$ is a $q$-periodic function, that is, periodic with period $q$.
\end{defn}

\begin{thm}
    For any  $q$-periodic measurable set $E \subseteq \mathbb{R}$ and any $a\in \mathbb{R}$,  
    \begin{equation}
    \gamma(E)=\lim_{t\to \infty}\frac{\lambda\left(E \cap[-t,t]\right)}{2t}=\frac{\lambda\left(E\cap[a, a+q]\right)}{q}
    \end{equation}  
\end{thm}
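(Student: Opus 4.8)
The plan is to reduce the symmetric average $\lambda_E(t)=\lambda(E\cap[-t,t])/(2t)$ to a count of whole periods of $E$ inside $[-t,t]$, controlled by a bounded remainder, and then to invoke Property 6 of the Banach limit $T$ (equivalently, equation (\ref{eq:ex111})) to identify the resulting limit with $\gamma(E)$. The $a$-independence will come out automatically once the period-integral is shown to be translation invariant.

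First I would record that the period-integral does not depend on the base point. Since $I_E$ is $q$-periodic, the function $a\mapsto\int_a^{a+q}I_E(s)\,ds=\lambda(E\cap[a,a+q])$ is (locally) absolutely continuous with derivative $I_E(a+q)-I_E(a)=0$ for a.e.\ $a$, hence constant. Write $c:=\lambda(E\cap[0,q])$, so that $\lambda(E\cap[a,a+q])=c$ for every $a\in\mathbb{R}$; this already disposes of the $a$-independence asserted on the right-hand side.

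Next, for $t>0$ set $N=\lfloor 2t/q\rfloor$, so that $-t+Nq\le t<-t+(N+1)q$. Partitioning $[-t,t]$ into the $N$ consecutive length-$q$ blocks $[-t+kq,\,-t+(k+1)q]$ for $k=0,\dots,N-1$, together with a leftover interval $[-t+Nq,\,t]$ of length $2t-Nq\in[0,q)$, and applying the base-point independence from the previous step to each full block, I obtain
\begin{equation}
\lambda(E\cap[-t,t])=Nc+R_t,\qquad R_t\in[0,q).
\end{equation}
Dividing by $2t$ and letting $t\to\infty$, the inequalities $Nq\le 2t<(N+1)q$ force $N/(2t)\to 1/q$, while $0\le R_t/(2t)<q/(2t)\to 0$. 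Hence the limit exists and equals $c/q=\lambda(E\cap[a,a+q])/q$. Since this limit exists, equation (\ref{eq:ex111}) (Property 6 of $T$) gives $\gamma(E)=\lim_{t\to\infty}\lambda(E\cap[-t,t])/(2t)$, completing the chain of equalities.

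The only point requiring care is the remainder bookkeeping: one must confirm that the partial final period contributes $o(t)$ and that the off-by-one ambiguity in $N$ is immaterial, both of which follow from the uniform bounds $0\le R_t<q$ and $0\le 2t-Nq<q$. I do not expect a genuine obstacle here; the statement is, in essence, the assertion that the symmetric Cesàro average of a periodic indicator converges to its mean over a single period.
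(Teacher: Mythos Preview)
Your argument is correct: the translation-invariance of $a\mapsto\lambda(E\cap[a,a+q])$ is established cleanly, the block decomposition of $[-t,t]$ into $N=\lfloor 2t/q\rfloor$ full periods plus a remainder of length at most $q$ yields $\lambda(E\cap[-t,t])=Nc+R_t$ with $0\le R_t<q$, and the limit $c/q$ follows. Property~6 then identifies the limit with $\gamma(E)$.

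The paper itself states this theorem without proof, presumably regarding it as a routine computation. Your proposal supplies exactly the missing details; there is nothing to compare against, and your approach is the natural one.
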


\begin{cor}
    If $f$ is a $q$-periodic function on $\mathbb{R}$, then for any $s>0$, $ E_s = \{|f|>s \}$ is a $q$-periodic set, and therefore for any $a \in \mathbb{R}$
    \begin{equation}
        \gamma_f(s) = \lim\limits_{t \rightarrow \infty} \frac{\lambda(\{|f| > s \} \cap [-t,t] )}{2t} = \frac{\lambda( \{ |f|>s \} \cap [a,a+q] )}{q}.
    \end{equation}
\end{cor}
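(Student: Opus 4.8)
The plan is to obtain this corollary as an immediate specialization of the preceding theorem to the superlevel sets of $f$, so the only genuine work is to verify that those sets inherit the periodicity of $f$. First I would observe that $q$-periodicity of $f$ gives $f(x+q)=f(x)$ for all $x$, hence $|f(x+q)|=|f(x)|$, and therefore $x\in E_s$ if and only if $x+q\in E_s$. Equivalently, $I_{E_s}(x+q)=I_{E_s}(x)$ for all $x$, so $I_{E_s}$ is a $q$-periodic function. By the definition of a $q$-periodic set this says precisely that $E_s=\{|f|>s\}$ is $q$-periodic, which establishes the first assertion of the corollary. (Measurability of $E_s$ is part of the standing hypothesis that $f$ is a measurable function on $(\mathbb{R},\mathcal{L}(\mathbb{R}),\lambda)$, so $E_s\in\mathcal{L}(\mathbb{R})$.)

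Having shown $E_s$ is a $q$-periodic measurable set, I would then apply the preceding theorem with $E=E_s$. That theorem yields, for every $a\in\mathbb{R}$,
\begin{equation}
\gamma(E_s)=\lim_{t\to\infty}\frac{\lambda\left(E_s\cap[-t,t]\right)}{2t}=\frac{\lambda\left(E_s\cap[a,a+q]\right)}{q}.
\end{equation}
It remains only to identify the left-hand quantity with $\gamma_f(s)$. Since the distribution is defined by $\gamma_f(s)=\gamma^*(\{|f|>s\})$ and $E_s\in\mathcal{L}(\mathbb{R})$, the remark that $\mu^*(E)=\mu(E)$ for $E$ in the field gives $\gamma^*(E_s)=\gamma(E_s)$. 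Substituting $E_s=\{|f|>s\}$ into the displayed chain of equalities then produces exactly the stated identity for $\gamma_f(s)$.

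I do not anticipate any substantive obstacle here: the entire analytic content—the existence of the limit and its evaluation as a single-period average—is supplied by the preceding theorem, and the corollary is a routine transfer of that statement to the family of sets $\{|f|>s\}$. The only points requiring care are the elementary deduction that periodicity of $f$ forces periodicity of $E_s$ and the identification $\gamma_f(s)=\gamma(E_s)$, both of which are immediate. If one wished, one could note that the $a$-independence of the final expression is likewise inherited directly from the theorem and need not be re-proved.
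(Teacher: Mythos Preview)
Your proposal is correct and matches the paper's intended approach: the paper states this as a corollary with no proof, treating it as immediate from the preceding theorem applied to the $q$-periodic set $E_s=\{|f|>s\}$. Your verification that $E_s$ is $q$-periodic and the identification $\gamma_f(s)=\gamma(E_s)$ are exactly the routine checks the paper leaves implicit.
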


\begin{thm}\label{thmpperiodic}
\label{thm:periodicgamma}
    If $f$ is an integrable $q$-periodic function, then for any $a \in \mathbb{R}$, $\int_{\mathbb{R}} f d\gamma = \frac{1}{2q} \int_{a-q}^{a+q} f(x)dx $
\end{thm}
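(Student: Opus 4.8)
The plan is to reduce the charge integral $\int_{\mathbb R}f\,d\gamma$ to an ordinary Lebesgue integral over a single period, using the preceding Corollary, which identifies the $\gamma$-distribution of a periodic function with the normalized Lebesgue distribution on $[a,a+q]$. First I would reduce to the nonnegative case by writing $f=f^+-f^-$; both $f^\pm$ are nonnegative, $q$-periodic and integrable over one period, and since the integral is linear on $\dot{T}M(\mathbb R,\mathcal L,\gamma)$ it suffices to prove the identity for $f\ge 0$. For a nonnegative \emph{simple} periodic integrable function $g$ the identity is already available: by Theorem~\ref{lemma:3}(1), $\int_{\mathbb R}g\,d\gamma=\int_0^\infty\gamma_g(t)\,dt$, and the preceding Corollary gives $\gamma_g(t)=\tfrac1q\lambda(\{g>t\}\cap[a,a+q])$, so the ordinary layer-cake formula on the bounded interval $[a,a+q]$ yields $\int_{\mathbb R}g\,d\gamma=\tfrac1q\int_a^{a+q}g\,dx$.

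For a general nonnegative periodic integrable $f$ I would take the standard increasing sequence of simple functions $s_n\uparrow f$, namely $s_n=\sum_{k=0}^{n2^n-1}\tfrac{k}{2^n}I_{\{k2^{-n}\le f<(k+1)2^{-n}\}}+nI_{\{f\ge n\}}$. Every level set $\{f>t\}$ is $q$-periodic, so each $s_n$ is a simple periodic function; since $\gamma$ is a probability charge every piece has finite charge and each $s_n$ is $\gamma$-integrable. I would then check that $\{s_n\}$ is a determining sequence for $f$. Because $\{f-s_n>\delta\}\subseteq\{f\ge n\}$ once $2^{-n}<\delta$, and this set is periodic, the Corollary gives $\gamma^*(\{f-s_n>\delta\})=\tfrac1q\lambda(\{f\ge n\}\cap[a,a+q])$, which tends to $0$ by integrability of $f$ over one period; hence $s_n$ converges to $f$ in $\gamma$-outer charge. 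Moreover $|s_n-s_m|$ is a nonnegative simple periodic function, so by the simple-function case $\int_{\mathbb R}|s_n-s_m|\,d\gamma=\tfrac1q\int_a^{a+q}|s_n-s_m|\,dx\to 0$, since $\{s_n\}$ is Cauchy in $L^1[a,a+q]$. Thus $f$ is $\gamma$-integrable and $\int_{\mathbb R}f\,d\gamma=\lim_n\int_{\mathbb R}s_n\,d\gamma=\lim_n\tfrac1q\int_a^{a+q}s_n\,dx=\tfrac1q\int_a^{a+q}f\,dx$, the last equality by monotone convergence for the genuine Lebesgue integral on the bounded interval.

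It then remains to rewrite the answer: by $q$-periodicity the substitution $x\mapsto x-q$ gives $\int_{a-q}^{a}f=\int_a^{a+q}f$, so $\tfrac1q\int_a^{a+q}f=\tfrac1{2q}\int_{a-q}^{a+q}f$, which is the stated form; the signed case follows by subtracting the results for $f^+$ and $f^-$. The main obstacle is the passage to the limit in the second paragraph: $\gamma$ is merely finitely additive, so neither monotone convergence nor the layer-cake formula is valid directly on the charge side. The point that makes the argument work is that every set and function occurring in it is $q$-periodic, and the preceding Corollary shows that on periodic sets $\gamma$ agrees with the $\sigma$-additive set function $E\mapsto\tfrac1q\lambda(E\cap[a,a+q])$; this restores $\sigma$-additivity precisely where the convergence theorems are needed.
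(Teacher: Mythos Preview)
Your proposal is correct and follows the same strategy as the paper: reduce to the nonnegative case, use the preceding Corollary to identify $\gamma_f$ with the normalized Lebesgue distribution on one period, and apply the layer-cake formula. The paper carries this out in one line by invoking Theorem~\ref{lemma:3}(1) directly for the integrable $f$, whereas you supply the missing approximation step by building an explicit periodic simple determining sequence and passing to the limit --- a worthwhile addition, since Theorem~\ref{lemma:3} is only stated for simple functions.
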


\begin{proof}
    Let $ f $ be a non-negative integrable $q$-periodic function. Since $ f $ is periodic,
	\begin{equation}
	\begin{aligned}
	\gamma_f(s) &= \lim\limits_{t \rightarrow \infty} \frac{\lambda\left( \{ f > s \} \cap [-t,t] \right)}{2t} = \frac{\lambda\left( \{ f > s \} \cap [-q,q] \right)}{2q} = \frac{1}{2q} \lambda_{fI_{[-q,q]}}(s).
	\end{aligned}
	\end{equation}
	By Lemma (\ref{lemma:3}),
	\begin{equation}\label{pperiodic}
	\int_{\mathbb{R}} f d\gamma = \int_{0}^{\infty} \gamma_{f}(s)ds = \frac{1}{2q}\int_{0}^{\infty} \lambda_{fI_{[-q,q]}}(s)ds = \frac{1}{2q} \int_{-q}^{q} f(x)dx = \frac{1}{2q}\int_{a-q}^{a+q} f(x)dx
	\end{equation}
	proving the claim for non-negative functions. More generally, if $ f $ is any $q$-periodic integrable function, we apply (\ref{pperiodic}) to $ f^+ $ and to $ f^- $. 
\end{proof}

\begin{thm}
Any almost periodic trigonometric polynomial $P_n(x)$ is integrable over $(\mathbb{R}, \mathcal{L}(\mathbb{R}), \gamma)$ with
\begin{equation}
\int_{\mathbb{R}}P_n d\gamma=\lim_{T\to \infty}\frac{1}{2T}\int_{-T}^{T} P_n(x) dx
\end{equation}
\end{thm}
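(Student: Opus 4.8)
The plan is to reduce the claim to single exponentials by linearity and then to exploit the fact that the charge integral of a bounded function agrees with the Banach limit $T$ applied to its symmetric running average. Writing $P_n(x)=\sum_{k=1}^n a(\eta_k;P_n)e^{i\eta_k x}$ and splitting each $e^{i\eta_k x}$ into its real and imaginary parts $\cos(\eta_k x)$ and $\sin(\eta_k x)$, it suffices, since the integral on $\dot{T}M(\mathbb{R})$ is linear and complex-valued functions are integrated componentwise, to treat each bounded periodic summand $g$ separately. Each such summand is either the constant $1$ (when $\eta_k=0$) or a genuinely periodic trigonometric function of period $q_k=2\pi/|\eta_k|$.

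First I would establish integrability. Since $\gamma$ is a probability charge, $\gamma(\mathbb{R})=1<\infty$, so every simple function built from Lebesgue level sets is automatically integrable. Given a bounded periodic $g$, I partition its range into finitely many subintervals and let $g_m$ be the associated simple function; its level sets $\{a<g\le b\}$ are $q_k$-periodic, hence lie in $\mathcal{L}(\mathbb{R})$. The $g_m$ converge to $g$ uniformly, hence converge in outer charge, and because the total mass is finite, uniform convergence also yields $\lim_{m,l\to\infty}\int|g_m-g_l|\,d\gamma=0$. Thus $\{g_m\}$ is a determining sequence and $g$ is integrable over $(\mathbb{R},\mathcal{L}(\mathbb{R}),\gamma)$. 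Alternatively, both integrability and the value of $\int g\,d\gamma$ follow directly from Theorem \ref{thm:periodicgamma}.

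The key identity comes next. For a simple function $f=\sum_j\beta_j I_{E_j}$ in canonical form, the definition $\gamma(E_j)=T(\lambda_{E_j})$ in (\ref{eq:defgamma}), (\ref{eq:fsube}) together with the linearity of $T$ gives
\[
\int_{\mathbb{R}} f\,d\gamma=\sum_j\beta_j\,T(\lambda_{E_j})=T\!\left(\frac{1}{2t}\int_{-t}^{t}f(s)\,ds\right),
\]
since $\sum_j\beta_j\lambda_{E_j}(t)=\frac{1}{2t}\int_{-t}^{t}f(s)\,ds$. Applying this to the determining sequence $\{g_m\}$ and using that $T$ is bounded by $\norm{\cdot}_{L^{\infty}}$ (Property 5) while $\frac{1}{2t}\int_{-t}^{t}g_m\to\frac{1}{2t}\int_{-t}^{t}g$ uniformly in $t$, the identity passes to $g$, and by linearity it extends to $P_n$.

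Finally I would invoke the existence of the Cesàro limit. For $\eta_k\neq 0$ one computes $\frac{1}{2T}\int_{-T}^{T}e^{i\eta_k x}\,dx=\frac{\sin(\eta_k T)}{\eta_k T}\to 0$, while for $\eta_k=0$ the average equals $1$ for all $T$; hence the symmetric running average of $P_n$ has a limit as $T\to\infty$. Property 6 of $T$ then identifies $T$ of this average with the limit itself, yielding $\int_{\mathbb{R}} P_n\,d\gamma=\lim_{T\to\infty}\frac{1}{2T}\int_{-T}^{T}P_n(x)\,dx$. The main obstacle is the integrability step: one must verify the $TM$-measurability and the $L^1$-Cauchy determining-sequence conditions carefully, whereas the two limit computations and the passage through $T$ are routine once finiteness of the total mass and continuity of $T$ are in hand.
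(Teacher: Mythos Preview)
Your proof is correct. The paper's own argument is a one-liner: it applies Theorem~\ref{thm:periodicgamma} to each periodic summand $e^{i\eta_k x}$ (using that $\frac{1}{2q}\int_{a-q}^{a+q}e^{i\eta_k x}\,dx$ equals the symmetric Ces\`aro mean), and invokes linearity. Your main route is different and more self-contained: you establish directly, via the definition $\gamma(E)=T(\lambda_E)$ and linearity of $T$, the identity $\int_{\mathbb{R}} f\,d\gamma=T\!\left(\tfrac{1}{2t}\int_{-t}^{t}f\right)$ first for simple $f$ and then for bounded $g$ by approximation, and only at the end use Property~6 of $T$ to replace the Banach limit by the genuine limit. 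This bypasses the distribution-function computation underlying Theorem~\ref{thm:periodicgamma} and makes explicit why the charge integral coincides with the averaged Lebesgue integral for any bounded $TM$-measurable function on a probability charge---a fact the paper's proof uses only implicitly through Theorem~\ref{thm:periodicgamma}. One small correction: your parenthetical ``Alternatively, both integrability and the value \dots\ follow directly from Theorem~\ref{thm:periodicgamma}'' overstates slightly, since that theorem \emph{assumes} integrability; your own determining-sequence construction is what actually supplies it.
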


\begin{proof}
    Apply Theorem \ref{thmpperiodic} to each of the terms, $e^{i\eta_k x}$.  
\end{proof}

Applying (\ref{pperiodic}) to (\ref{eq:intro1}),

 \begin{thm}
    $\left\{e^{i\eta_k x}\right\}$ with $\eta_k\in \mathbb{R}$, $k\in \mathbb{N}$ is an orthonormal system in $L^2(\mathbb{R}, \mathcal{L}(\mathbb{R}), \gamma)$.
\end{thm}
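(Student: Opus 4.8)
The plan is to compute the $L^2(\mathbb{R}, \mathcal{L}(\mathbb{R}), \gamma)$ inner product of two members of the system directly and show it equals the Kronecker delta. Using the convention of (\ref{eq:intro1}), the inner product is
\begin{equation}
\langle e^{i\eta_j x}, e^{i\eta_k x}\rangle = \int_{\mathbb{R}} e^{i\eta_j x}\, e^{-i\eta_k x}\, d\gamma = \int_{\mathbb{R}} e^{i\eta x}\, d\gamma, \qquad \eta := \eta_j - \eta_k,
\end{equation}
so the entire claim reduces to evaluating the single integral $\int_{\mathbb{R}} e^{i\eta x}\, d\gamma$ for an arbitrary real $\eta$ and checking that it equals $1$ when $\eta = 0$ and $0$ otherwise. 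Before integrating I would note that a single exponential $e^{i\eta x}$ is an almost periodic trigonometric polynomial of the form (\ref{eq:pap1}), hence integrable over $(\mathbb{R}, \mathcal{L}(\mathbb{R}), \gamma)$ by the preceding theorem, so the inner product is well defined.

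For $\eta = 0$ the integrand is the constant $1$, and $\int_{\mathbb{R}} 1\, d\gamma = \gamma(\mathbb{R}) = 1$ by (\ref{eq:ex113}), giving the normalization $\langle e^{i\eta_k x}, e^{i\eta_k x}\rangle = 1$. For $\eta \neq 0$ the function $e^{i\eta x}$ is $q$-periodic with $q = 2\pi/|\eta|$. Since Theorem \ref{thmpperiodic} is stated for real-valued periodic functions, I would apply it separately to the real and imaginary parts $\cos\eta x$ and $\sin\eta x$ (each integrable and $q$-periodic) and recombine by linearity of the integral, obtaining
\begin{equation}
\int_{\mathbb{R}} e^{i\eta x}\, d\gamma = \frac{1}{2q}\int_{-q}^{q} e^{i\eta x}\, dx.
\end{equation}
The right-hand side is an elementary integral of an exponential over an interval whose length $2q = 4\pi/|\eta|$ is an integer multiple of the period, so it vanishes; equivalently, it is exactly the limit already computed in (\ref{eq:intro1}). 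Combining the two cases yields $\langle e^{i\eta_j x}, e^{i\eta_k x}\rangle = \delta_{jk}$, which is orthonormality.

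The main obstacle is entirely one of bookkeeping rather than of substance: Theorem \ref{thmpperiodic} is phrased for real functions, so the passage to the complex exponential must be made explicit, either by splitting $e^{i\eta x}$ into $\cos\eta x$ and $\sin\eta x$ as above, or by observing directly that the level sets $\{|\cos\eta x| > s\}$ and $\{|\sin\eta x| > s\}$ are $q$-periodic sets so that the periodic-averaging formula (\ref{pperiodic}) applies to each part. Once that identification of the charge-space integral with the periodic mean is in place, the remaining computation is routine and reproduces precisely the Besicovitch inner product (\ref{eq:intro1}).
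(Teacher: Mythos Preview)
Your proof is correct and follows essentially the same approach as the paper, which simply says ``Applying (\ref{pperiodic}) to (\ref{eq:intro1})''. You are in fact more careful than the paper in making explicit the reduction to real and imaginary parts so that Theorem~\ref{thmpperiodic} applies, but the substance---identifying the $\gamma$-integral of a periodic function with its periodic mean and then reading off the result from (\ref{eq:intro1})---is identical.
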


\begin{cor}[Bessel's Inequality]
If $|f|^2$ is integrable over $(\mathbb{R}, \mathcal{L}(\mathbb{R}), \gamma)$ and $\eta_k$ are all different, then 

\begin{equation}
\label{eq:m0}
\sum_{k=0}^{\infty}\left| \int_{\mathbb{R}} f(x)e^{-i\eta_k x} d\gamma \right|^2 \leq \int_{\mathbb{R}} \left|f\right|^2 d\gamma
\end{equation}

\end{cor}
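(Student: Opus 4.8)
The plan is to reduce the statement to the classical Bessel inequality in a genuine Hilbert space. Write $c_k=\int_{\mathbb{R}} f(x)e^{-i\eta_k x}\,d\gamma$ for the Fourier coefficients. First I would verify these are well defined: since $\gamma$ is a probability charge we have $L^2(\mathbb{R},\mathcal{L}(\mathbb{R}),\gamma)\subseteq L^1$, and because each $e^{i\eta_k x}$ is bounded it lies in $L^2$; hence the product $f e^{-i\eta_k x}$ is integrable and $c_k$ makes sense. The one point of bookkeeping here is that the charge-space integration theory of Section \ref{pp:sec:charges} was set up for real-valued functions, so I would treat $f e^{-i\eta_k x}$ through its real and imaginary parts, integrating each separately; equivalently, one extends the isomorphism $\phi$ of Theorem \ref{thm:cfs1} to complex functions by letting it act on real and imaginary parts coordinatewise.

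With the coefficients in hand, the cleanest route is to transport everything through Fefferman's embedding. By Theorem \ref{thm:cfs1}, $\phi$ carries $f$ to a genuine measurable function $\tilde f=\phi(f)$ on the measure space $(\Omega',\Sigma',\mu')$ and each $e^{i\eta_k x}$ to $\tilde e_k=\phi(e^{i\eta_k x})$, preserving the $L^2$ norm (part 2) and the integral (part 3), and being multiplication-preserving. Consequently the inner products are preserved, so that $\langle f,e^{i\eta_k x}\rangle_{\gamma}=\langle \tilde f,\tilde e_k\rangle_{\mu'}$ and $\langle e^{i\eta_j x},e^{i\eta_k x}\rangle_{\gamma}=\langle \tilde e_j,\tilde e_k\rangle_{\mu'}$; by the preceding theorem $\{\tilde e_k\}$ is then an orthonormal system in the honest Hilbert space $L^2(\Omega',\Sigma',\mu')$. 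Classical Bessel's inequality gives $\sum_k |\langle \tilde f,\tilde e_k\rangle_{\mu'}|^2\le \norm{\tilde f}_{L^2(\Omega')}^2=\norm{f}_{L^2(\mathbb{R},\gamma)}^2$, and transporting back yields (\ref{eq:m0}).

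Alternatively, one can avoid the embedding and argue directly, since Bessel's inequality uses only the inner-product structure and not completeness. Setting $S_N=\sum_{k=0}^N c_k e^{i\eta_k x}$, which lies in $L^2$ as a finite combination of bounded functions, I would expand
\begin{equation}
0\le \norm{f-S_N}_{L^2}^2=\norm{f}_{L^2}^2-\sum_{k=0}^N |c_k|^2,
\end{equation}
using the orthonormality relation established in $L^2(\mathbb{R},\mathcal{L}(\mathbb{R}),\gamma)$ together with the linearity of the charge-space integral to compute $\langle f,S_N\rangle$ and $\norm{S_N}_{L^2}^2=\sum_{k=0}^N|c_k|^2$. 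This gives $\sum_{k=0}^N |c_k|^2\le \norm{f}_{L^2}^2$ for every $N$, and letting $N\to\infty$ completes the proof.

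The main obstacle is not the inequality itself, which is purely formal, but the two measure-theoretic justifications peculiar to the charge setting: first, the integrability of the cross terms and the validity of expanding $\norm{f-S_N}^2$ termwise in the finitely additive integral, and second, making the complex-valued computation legitimate within a framework built for real functions. Both are handled uniformly by passing to the measure space of Theorem \ref{thm:cfs1}, which is why I would present the embedding argument as the primary one and relegate the direct computation to a remark.
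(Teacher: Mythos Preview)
Your proposal is correct. The paper gives no proof at all: it records Bessel's inequality as an immediate corollary of the preceding theorem that $\{e^{i\eta_k x}\}$ is orthonormal in $L^2(\mathbb{R},\mathcal{L}(\mathbb{R}),\gamma)$, so the implicit argument is exactly your second (direct) route---expand $\norm{f-S_N}_{L^2}^2$ and use linearity of the charge-space integral. Your primary route through the Fefferman embedding is also valid and has the virtue of dispatching the real-vs-complex and integrability-of-cross-terms bookkeeping in one stroke, but it deploys more machinery than the paper deems necessary for this particular statement; the paper reserves the embedding for the Marcinkiewicz--Hunt step in Theorem~5.2, where interpolation genuinely requires a $\sigma$-additive measure space.
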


\qquad

\section{A Paley type theorem for almost periodic functions}\label{pp:sec:paley}

We define the following extension of the Besicovitch $B^p_{a p}$ space. 

\begin{defn}
	For $0<p, q< \infty$ define  $ B^{p, q}_{ap}(\mathbb{R}, \mathcal{L}(\mathbb{R}), \gamma)$ to be the completion of almost periodic trigonometric polynomials under the $\norm{\cdot}_{L^{p,q}(\mathbb{R}, \mathcal{L}, \gamma)}$ norm.	
\end{defn}

By Remark $\ref{rmk:msch}$, $L^{p, p}(\mathbb{R}, \mathcal{L}(\mathbb{R}), \gamma)=L^{p}(\mathbb{R}, \mathcal{L}(\mathbb{R}), \gamma)$, and so the theory of the spaces $B^{p, q}_{ap}(\mathbb{R}, \mathcal{L}(\mathbb{R}), \gamma)$ is an extension of the theory of the Besicovitch spaces $B^{p}_{a p}$.

We also have the following result.

\begin{thm}
	If $f\in L^1(\mathbb{R}, \mathcal{L}(\mathbb{R}), \gamma)$ then
	\begin{equation}
	\label{eq:m1}
	|a(\eta_k, f)|=\left|\int_{\mathbb{R}}f(t)e^{-i\eta_k t} d\gamma(t)\right|\le\norm{f}_{L^1(\mathbb{R}, \mathcal{L}(\mathbb{R}), \gamma)}\norm{e^{-i\eta_k t}}_{L^{\infty}}\le \norm{f}_{L^1(\mathbb{R}, \mathcal{L}(\mathbb{R}), \gamma)}
	\end{equation}
\end{thm}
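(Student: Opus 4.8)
The plan is to recognize the asserted chain as nothing more than the triangle inequality for the charge integral followed by the generalized H\"older inequality $\norm{gh}_{L^1} \le \norm{g}_{L^1}\norm{h}_{L^\infty}$, specialized to $g=f$ and $h=e^{-i\eta_k t}$. The only genuine content is the bookkeeping needed to pass from the mean-value definition of $a(\eta_k,f)$ to a charge integral and to accommodate complex-valued integrands, since the integration theory of Sections \ref{pp:sec:charges}--\ref{pp:sec:embedfeff} is developed for real-valued functions.

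First I would justify the leftmost equality. By the theorem identifying the charge integral of an almost periodic trigonometric polynomial with its mean value, for every trigonometric polynomial $P$ we have $\int_{\mathbb{R}} P(t)e^{-i\eta_k t}\,d\gamma(t)=\lim_{\tau\to\infty}\frac{1}{2\tau}\int_{-\tau}^{\tau} P(t)e^{-i\eta_k t}\,dt=a(\eta_k,P)$, the last equality being the definition (\ref{eq:pap2a}). The functional $f\mapsto \int_{\mathbb{R}} f(t)e^{-i\eta_k t}\,d\gamma(t)$ is bounded (hence continuous) on $L^1(\mathbb{R},\mathcal{L}(\mathbb{R}),\gamma)$ by the H\"older estimate proved below, and it agrees on the dense subspace of trigonometric polynomials with the functional defining $a(\eta_k,\cdot)$; density then yields the equality for all $f$. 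Before any of this I must check that $fe^{-i\eta_k t}$ is $\gamma$-integrable, which follows since $|f|$ is integrable and $e^{-i\eta_k t}\in L^\infty$ with $\norm{e^{-i\eta_k t}}_{L^\infty}=1$ (because $|e^{-i\eta_k t}|=1$ identically), via the $L^1$--$L^\infty$ product rule transferred from the measure-space setting by Remark \ref{rmk:msch}.

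Next I would establish the two inequalities. Writing $g=fe^{-i\eta_k t}$, the rotation trick gives the triangle inequality: choosing $\theta$ so that $e^{i\theta}\int_{\mathbb{R}} g\,d\gamma$ is real and nonnegative, one has $\left|\int_{\mathbb{R}} g\,d\gamma\right| = \int_{\mathbb{R}} \mathrm{Re}(e^{i\theta}g)\,d\gamma \le \int_{\mathbb{R}} |g|\,d\gamma$, using linearity and monotonicity of the charge integral together with $\mathrm{Re}(e^{i\theta}g)\le|g|$ pointwise. Then the pointwise estimate $|g| = |f|\,|e^{-i\eta_k t}| \le |f|\,\norm{e^{-i\eta_k t}}_{L^\infty}$ a.e., with monotonicity again, yields $\int_{\mathbb{R}} |g|\,d\gamma \le \norm{f}_{L^1(\mathbb{R},\mathcal{L}(\mathbb{R}),\gamma)}\norm{e^{-i\eta_k t}}_{L^\infty}$, which is the middle term; the final bound then follows from $\norm{e^{-i\eta_k t}}_{L^\infty}=1$.

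The main obstacle is the complex-valued integrand, since the charge-space theory is built for real-valued functions. I would handle this by decomposing $g=\mathrm{Re}\,g + i\,\mathrm{Im}\,g$ and defining $\int_{\mathbb{R}} g\,d\gamma = \int_{\mathbb{R}} \mathrm{Re}\,g\,d\gamma + i\int_{\mathbb{R}} \mathrm{Im}\,g\,d\gamma$, observing that $|g|=|f|$ is a genuine nonnegative real-valued function to which the real theory applies directly (integrability, monotonicity, and the H\"older bound through Remark \ref{rmk:msch}). The rotation trick then reduces the complex triangle inequality to the real monotonicity already available, so no new machinery beyond what is established earlier is required.
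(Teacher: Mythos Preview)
Your proposal is correct and, in substance, matches the paper: the paper gives no separate proof environment for this theorem, treating the displayed chain (\ref{eq:m1}) as self-justifying via the H\"older-type bound $\norm{fg}_{L^1}\le\norm{f}_{L^1}\norm{g}_{L^\infty}$ and the trivial fact $\norm{e^{-i\eta_k t}}_{L^\infty}=1$. Your write-up simply makes explicit the steps the paper leaves implicit---the triangle inequality via the rotation trick, the pointwise estimate plus monotonicity, and the transfer of the $L^1$--$L^\infty$ product rule from measure spaces via Remark~\ref{rmk:msch}---and correctly notes the need to interpret complex integrands through real and imaginary parts.

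One minor point worth tightening: your density argument for the leftmost equality assumes trigonometric polynomials are dense in all of $L^1(\mathbb{R},\mathcal{L}(\mathbb{R}),\gamma)$, but the paper only establishes that their closure is $B^1_{ap}$, which need not be the whole of $L^1$. The paper sidesteps this by effectively \emph{taking} $a(\eta_k,f):=\int_{\mathbb{R}}f(t)e^{-i\eta_k t}\,d\gamma(t)$ as the definition for $f\in L^1(\mathbb{R},\mathcal{L}(\mathbb{R}),\gamma)$ (consistent with (\ref{eq:pap2a}) on $B^1_{ap}$ by the argument you give), so the first equality is notational rather than a claim requiring proof. With that reading, your argument is complete.
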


	Let $\bm{\eta}$ be the counting measure on $2^{\mathbb{R}}$.
	Then the generalized Fourier operator is linear and  maps $L^1(\mathbb{R}, \mathcal{L}(\mathbb{R}), \gamma)$ into $L^{\infty}(\mathbb{R},2^{\mathbb{R}}, \eta) $ and $L^2(\mathbb{R}, \mathcal{L}(\mathbb{R}), \gamma)$ into $L^2(\mathbb{R},2^{\mathbb{R}}, \eta)$ continuously.

	\begin{theorem} Let $1<p<2$ and $0<q<\infty$,  $1/p+1/p'=1$. If $f\in L^{p, q}(\mathbb{R}, \mathcal{L}(\mathbb{R}), \gamma)$, with
		\begin{equation}
		\label{eq:pap2a1}
		a(\eta_k, f)=\int_{\mathbb{R}}f(t)e^{-i\eta_k t} d\gamma(t), \qquad \eta_k \in \mathbb{R}.
		\end{equation}
		Then $\{a(\eta_k, f)\}_{k\in\mathbb{N}}\in L^{p', q}(\mathbb{R},2^{\mathbb{R}}, \eta)$ and 
		\begin{equation}
		\label{eq:m2}
		\norm{\{a(\eta_k, f)\}}_{L^{p', q}(\mathbb{R},2^{\mathbb{R}}, \eta)}\le C(p,q)\norm{f}_{L^{p, q}(\mathbb{R}, \mathcal{L}(\mathbb{R}), \gamma)}.
		\end{equation}
		
	\end{theorem}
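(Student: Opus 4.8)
The plan is to obtain (\ref{eq:m2}) by interpolating the two endpoint estimates for the Fourier operator $\mathcal{F} : f \mapsto \{a(\eta_k, f)\}$ that were just recorded, using the real (Marcinkiewicz) interpolation method. Write $\mathcal{F}$ for the linear map sending $f$ to its sequence of Fourier coefficients. Inequality (\ref{eq:m1}) says $\sup_k |a(\eta_k, f)| \le \norm{f}_{L^1(\mathbb{R}, \mathcal{L}(\mathbb{R}), \gamma)}$, that is, $\mathcal{F} : L^1(\mathbb{R}, \mathcal{L}(\mathbb{R}), \gamma) \to L^{\infty}(\mathbb{R}, 2^{\mathbb{R}}, \eta)$ has norm at most $1$, while Bessel's inequality (\ref{eq:m0}) gives $\mathcal{F} : L^2(\mathbb{R}, \mathcal{L}(\mathbb{R}), \gamma) \to L^2(\mathbb{R}, 2^{\mathbb{R}}, \eta)$ with norm at most $1$. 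These are the endpoints $(1, \infty)$ and $(2, 2)$; the point $(1/p, 1/p')$ lies on the segment joining $(1, 0)$ and $(1/2, 1/2)$ in the Riesz diagram exactly when $1 < p < 2$, which is the hypothesis.

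Since the domain is a charge space, I would first move to a genuine measure space. By Theorem \ref{thm:cfs1} the isometric isomorphism $\phi$ identifies $L^{p,q}(\mathbb{R}, \mathcal{L}(\mathbb{R}), \gamma)$ with $L^{p,q}(\Omega', \Sigma', \gamma')$ on the finite measure space produced by the Fefferman construction, preserving the $L^1$, $L^2$, and $L^{p,q}$ norms and preserving integrals (parts (2)--(4)). Because $\phi$ is multiplication- and integral-preserving, each coefficient functional transfers as $a(\eta_k, f) = \int_{\mathbb{R}} f e^{-i\eta_k t}\, d\gamma = \int_{\Omega'} \phi(f)\,\phi(e^{-i\eta_k \cdot})\, d\gamma'$, so $\mathcal{F}$ corresponds under $\phi$ to the analogous operator on $(\Omega', \Sigma', \gamma')$ and the two endpoint bounds hold verbatim there. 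Complex-valued $f$ are handled by splitting into real and imaginary parts at the cost of an absolute constant; and since $\gamma'$ is a probability measure and $p > 1$ we have $L^{p,q}(\gamma') \subseteq L^1(\gamma')$, so every coefficient is well defined.

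On the measure space the conclusion is then a direct application of interpolation in its Lorentz-space form. Choose $\theta \in (0,1)$ by $1/p = (1-\theta) + \theta/2$; then $1/p' = \theta/2$ and $1/p + 1/p' = 1$. The real interpolation functor $(\,\cdot\,,\,\cdot\,)_{\theta, q}$ is exact of exponent $\theta$, and one has the standard identifications $(L^1, L^2)_{\theta, q} = L^{p,q}$ and $(L^{\infty}, L^2)_{\theta, q} = L^{p', q}$ (see \cite{Grafakos}). Hence the endpoint-bounded operator $\mathcal{F}$ satisfies $\mathcal{F} : L^{p,q}(\Omega', \Sigma', \gamma') \to L^{p',q}(\mathbb{R}, 2^{\mathbb{R}}, \eta)$, the operator bound being $1^{1-\theta}\cdot 1^{\theta} = 1$ times the norm-equivalence constants from the two identifications, which depend only on $\theta$ (equivalently $p$) and $q$; this is the constant $C(p,q)$. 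Transporting back through $\phi$ --- equivalently, invoking Remark \ref{rmk:msch}, since (\ref{eq:m2}) is phrased purely through $\norm{\cdot}_{L^{p,q}}$ norms and integrals --- yields (\ref{eq:m2}) on the charge space. Taking the secondary index $r = q = p$ recovers Theorem \ref{thm:pap2}, and letting $r$ range over $(0, \infty)$ gives the promised $L^{q,r}$ extension.

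The routine parts are the endpoint estimates (already in hand) and the interpolation identities (standard). The step requiring the most care is the transfer of $\mathcal{F}$ through the Fefferman embedding: one must verify that the coefficient functionals $f \mapsto \int f e^{-i\eta_k t}\, d\gamma$, and in particular the $L^{\infty}$ multipliers $e^{-i\eta_k t}$, are represented correctly by $\phi$ as integrals on $(\Omega', \Sigma', \gamma')$, so that the endpoint norms are genuinely preserved and standard measure-theoretic interpolation becomes applicable. The clean way to make this rigorous is to reduce first to trigonometric polynomials, where each coefficient is a finite sum and the endpoint estimates are transparent, and then pass to the limit by density in $L^{p,q}$.
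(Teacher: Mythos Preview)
Your proposal is correct and follows essentially the same route as the paper: transfer the endpoint bounds (\ref{eq:m0}) and (\ref{eq:m1}) to the Fefferman measure space via Theorem~\ref{thm:cfs1}, apply Marcinkiewicz--Hunt (real) interpolation there to obtain the $L^{p,q}\to L^{p',q}$ bound, and transport back. You supply more detail than the paper does---in particular the remark that the transfer of $\mathcal{F}$ through $\phi$ is the delicate step and the suggestion to verify it first on trigonometric polynomials---but the underlying strategy is identical.
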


\begin{proof}
    By Theorem \ref{thm:cfs1} part (4) the spaces $L^1(\mathbb{R}, \mathcal{L}(\mathbb{R}), \gamma)$, $L^{\infty}(\mathbb{R},2^{\mathbb{R}}, \eta) $, $L^2(\mathbb{R}, \mathcal{L}(\mathbb{R}), \gamma)$, and   $L^2(\mathbb{R},2^{\mathbb{R}}, \eta)$ are  isometrically isomorphic to $L^1(\mathbb{R}', \mathcal{L}'(\mathbb{R}), \gamma')$, $L^{\infty}(\mathbb{R}',2^{\mathbb{R}'}, \eta') $, $L^2(\mathbb{R}', \mathcal{L}'(\mathbb{R}'), \gamma')$, and   $L^2(\mathbb{R}',2^{\mathbb{R}'}, \eta')$ so (\ref{eq:m0}) and (\ref{eq:m1}) holds for the measure spaces. By Marcinkiewicz-Hunt, \cite{Hunt},  (\cite{Grafakos} p. 52), (\ref{eq:m2}) holds for the corresponding measure spaces  and therefore (\ref{eq:m2}) follows.
    
\end{proof}

\begin{remark}
    One may similarly prove the Marcinkiewicz-Hunt theorem for finitely additive measure spaces for $0<p_i, q_i<\infty$. 
\end{remark}

\newpage

\bibliographystyle{abbrv}

\end{document}